
\documentclass[11pt]{article}%
\usepackage{eurosym}
\usepackage{amssymb}
\usepackage{amsmath}
\usepackage{amsfonts}
\usepackage{color}
\usepackage{fancyhdr}
\usepackage{graphicx}
\usepackage{hyperref}%
\setcounter{MaxMatrixCols}{30}
\providecommand{\U}[1]{\protect\rule{.1in}{.1in}}
\newtheorem{proposition}{Proposition}[section]
\newtheorem{theorem}[proposition]{Theorem}
\newtheorem{corollary}[proposition]{Corollary}
\newtheorem{lemma}[proposition]{Lemma}

\newtheorem{definition}[proposition]{Definition}
\newtheorem{remark}[proposition]{Remark}

\newtheorem{condition}[proposition]{Condition}

\numberwithin{equation}{section}
\numberwithin{proposition}{section}
\newenvironment{proof}[1][Proof]{\noindent\textbf{#1.} }{\ \rule{0.5em}{0.5em}}

\pagestyle{fancy}
\lhead{}
\rhead{\today}

\def\be{\begin{equation}}
\def\ee{\end{equation}}

\begin{document}

\title{Limits of relative entropies associated with weakly interacting particle systems}
\author{Amarjit Budhiraja\thanks{Research supported in part by National Science
Foundation(DMS-1305120) and the Army Research Office (W911NF-10-1-0158,
W911NF- 14-1-0331)}, Paul Dupuis\thanks{Research supported in part by the Army
Research Office (W911NF-12-1-0222).}, Markus Fischer and Kavita Ramanan\thanks{Research supported in part by the Army Research Office (W911NF-12-1-0222)
and the National Science Foundation (NSF CMMI-1234100 and NSF DMS-1407504)}}
\maketitle

\begin{abstract}
The limits of scaled relative entropies between probability distributions
associated with $N$-particle weakly interacting Markov processes are
considered. The convergence of such scaled relative entropies is established
in various settings. The analysis is motivated by the role relative entropy
plays as a Lyapunov function for the (linear) Kolmogorov forward equation
associated with an ergodic Markov process, and Lyapunov function properties of
these scaling limits with respect to nonlinear finite-state Markov processes
are studied in the companion paper \cite{BDFR-Examples}.

\end{abstract}


\noindent\emph{2010 Mathematics Subject Classification. } Primary: 60K35,
93D30, 34D20; Secondary: 60F10, 60K25. \newline

\noindent\emph{Key Words and Phrases. } Nonlinear Markov processes, weakly
interacting particle systems, interacting Markov chains, mean field limit,
stability, metastability, Lyapunov functions, relative entropy, large deviations.

\section{Introduction}

\label{sec:intro} We consider a collection of $N$ weakly interacting
particles, in which each particle evolves as a continuous time pure jump
c\`{a}dl\`{a}g stochastic process taking values in a finite state space
$\mathcal{X}=\{1,\ldots,d\}$. The evolution of this collection of particles is
described by an $N$-dimensional time-homogeneous Markov process
$\boldsymbol{X}^{N}=\{X^{i,N}\}_{i=1,\ldots,N}$, where for $t\geq0$,
$X^{i,N}(t)$ represents the state of the $i$th particle at time $t$. The jump
intensity of any given particle depends on the configuration of other
particles only through the empirical measure
\begin{equation}
\mu^{N}(t)\doteq\frac{1}{N}\sum_{i=1}^{N}\delta_{X^{i,N}(t)},\quad t\in
\lbrack0,\infty), \label{def-mun}%
\end{equation}
where $\delta_{a}$ is the Dirac measure at $a$. Consequently, a typical
particle's effect on the dynamics of the given particle is of order $1/N$. For
this reason the interaction is referred to as a \textquotedblleft weak
interaction.\textquotedblright

Note that $\mu^{N}(t)$ is a random variable with values in the space
${\mathcal{P}}_{N}({\mathcal{X}})$ $\doteq{\mathcal{P}}({\mathcal{X}}%
)\cap\frac{1}{N}\mathbb{Z}^{d}$, where ${\mathcal{P}}({\mathcal{X}})$ is the
space of probability measures on ${\mathcal{X}}$, equipped with the usual
topology of weak convergence. In the setting considered here, at most one
particle will jump, i.e., change state, at a given time, and the jump
intensities of any given particle depend only on its own state and the state
of the empirical measure at that time. In addition, the jump intensities of
all particles will have the same functional form. Thus, if the initial
particle distribution of $\boldsymbol{X}^{N}(0)=\{X^{i,N}(0)\}_{i=1,\ldots,N}$
is exchangeable, then at any time $t>0$, $\boldsymbol{X}^{N}(t)=\{X^{i,N}%
(t)\}_{i=1,\ldots,N}$ is also exchangeable.

Such mean field weakly interacting processes arise in a variety of
applications ranging from physics and biology to social networks and
telecommunications, and have been studied in many works (see, e.g.,
\cite{AntFriRobTib08,BorMcdPro08,BorMcdPro12,GomGraLeB12,GraRob10,McK66,Szn91}%
). The majority of this research has focused on establishing so-called
\textquotedblleft propagation-of-chaos\textquotedblright\ results (see, e.g.,
\cite{HuaMalCai06, Gra00,GomGraLeB12, KolBook11,Kol12,McK66, Oes84, Szn91}).
Roughly speaking, such a result states that on any fixed time interval
$[0,T]$, the particles become asymptotically independent as $N\rightarrow
\infty$, and that for each fixed $t$ the distribution of a typical particle
converges to a probability measure $p(t)$, which coincides with the limit in
probability of the sequence of empirical measures $\{\mu^{N}(t)\}_{N\in
\mathbb{N}}$ as $N\rightarrow\infty$. Under suitable conditions, the function
$t\mapsto p(t)$ can be characterized as the unique solution of a nonlinear
differential equation on ${\mathcal{P}}({\mathcal{X}})$ of the form
\begin{equation}
\frac{d}{dt}p(t)=p(t)\Gamma(p(t)), \label{EqLimitKolmogorov}%
\end{equation}
where for each $p\in{\mathcal{P}}({\mathcal{X}})$, $\Gamma(p)$ is a rate
matrix for a Markov chain on ${\mathcal{X}}$. This differential equation
admits an interpretation as the forward equation of a \textquotedblleft
nonlinear\textquotedblright\ jump Markov process that represents the evolution
of the typical particle. In the context of weakly interacting diffusions, this
limit equation is also referred to as the McKean-Vlasov limit.

Other work on mean field weakly interacting processes has established central
limit theorems \cite{Tan, Sznit,Mel,KuXi2,BuKiSa} or sample path large
deviations of the sequence $\{\mu^{N}\}$ \cite{DawGar,BDF,DupRamWu12}. All of
these results are concerned with the behavior of the $N$-particle system over
a finite time interval $[0,T]$.

\subsection{Discussion of main results}

\label{sec:discmainres} An important but difficult issue in the study of
nonlinear Markov processes is stability. Here, what is meant is the stability
of the ${\mathcal{P}}({\mathcal{X}})$-valued deterministic dynamical system
$\{p(t)\}_{t\geq0}$. For example, one can ask if there is a unique, globally
attracting fixed point for the ordinary differential equation (ODE)
\eqref{EqLimitKolmogorov}. When this is not the case, all the usual questions
regarding stability of deterministic systems, such as existence of multiple
fixed points, their local stability properties, etc., arise here as well. One
is also interested in the connection between these sorts of stability
properties of the limit model and related stability and metastability (in the
sense of small noise stochastic systems) questions for the prelimit model.

There are several features which make stability analysis particularly
difficult for these models. One is that the state space of the system, being
the set of probability measures on $\mathcal{X}$, is not a linear space
(although it is a closed, convex subset of a Euclidean space). A standard
approach to the study of stability is through construction of suitable
Lyapunov functions. Obvious first choices for Lyapunov functions, such as
quadratic functions, are not naturally suited to such state spaces. Related to
the structure of the state space is the fact that the dynamics, linearized at
any point in the state space, always have a zero eigenvalue, which also
complicates the stability analysis.

The purpose of the present paper and the companion paper \cite{BDFR-Examples}
is to introduce and develop a systematic approach to the construction of
Lyapunov functions for nonlinear Markov processes. The starting point is the
observation that given any ergodic Markov process, the mapping $q\mapsto
R(q\Vert\pi)$, where $R$ is relative entropy and $\pi$ is the stationary
distribution, in a certain sense always defines a Lyapunov function for the
distribution of the Markov process \cite{Spi71}. We discuss this point in some
detail in Section \ref{sec:descent}. For an ergodic Markov process the
dynamical system describing the evolution of the law of the process (i.e., the
associated Kolmogorov's forward equation) is a linear ODE with a unique fixed
point. In contrast, for a nonlinear Markov process the corresponding ODE
\eqref{EqLimitKolmogorov} can have multiple fixed points which may or may not
be locally stable, and this is possible even when the jump rates given by the
off diagonal elements of $\Gamma(p)$ are bounded away from $0$ uniformly in
$p$. Furthermore, as is explained in Section \ref{sec:descent}, due to the
nonlinearity of $\Gamma(\cdot)$ relative entropy typically cannot be used
directly as a Lyapunov function for \eqref{EqLimitKolmogorov}.

The approach we take for nonlinear Markov processes is to lift the problem to
the level of the pre-limit $N$-particle processes that describe a linear
Markov process. Under mild conditions the $N$-particle process will be
ergodic, and thus relative entropy can be used to define a Lyapunov function
for the joint distribution of these $N$ particles. The scaling properties of
relative entropy and convergence properties of the weakly interacting system
then suggest that the limit of suitably normalized relative entropies for the
$N$-particle system, assuming it exists, is a natural candidate Lyapunov
function for the corresponding nonlinear Markov process. Specifically,
denoting the unique invariant measure of the $N$-particle Markov process
$\boldsymbol{X}^{N}$ by $\boldsymbol{\pi}_{N}\in\mathcal{P}(\mathcal{X}^{N})$,
the function $F:\mathcal{P}(\mathcal{X})\rightarrow\mathbb{R}$ defined by
\begin{equation}
F(q)=\lim_{N\rightarrow\infty}\tilde{F}_{N}(q)\doteq\lim_{N\rightarrow\infty
}\frac{1}{N}R\left(  \left.  \otimes^{N}q\right\Vert \boldsymbol{\pi}%
_{N}\right)  ,\;q\in\mathcal{P}(\mathcal{X}) \label{eq:firstlyap}%
\end{equation}
is a natural candidate for a Lyapunov function. The aim of this paper is the
calculation of quantities of the form (\ref{eq:firstlyap}). In the companion
paper \cite{BDFR-Examples} we will use these results to construct Lyapunov
functions for various particular systems.

Of course for this approach to work, we need the limit on the right side in
(\ref{eq:firstlyap}) to exist and to be computable. In Section
\ref{sec:gibbstype} we introduce a family of nonlinear Markov processes that
arises as the large particle limit of systems of Gibbs type. For this family,
the invariant distribution of the corresponding $N$-particle system takes an
explicit form and we show that the right side of \eqref{eq:firstlyap} has a
closed form expression. In Section 4 of \cite{BDFR-Examples} we show that this
limiting function is indeed a Lyapunov function for the corresponding
nonlinear dynamical system \eqref{EqLimitKolmogorov}.

The class of models just mentioned demonstrates that the approach for
constructing Lyapunov functions by studying scaling limits of the relative
entropies associated with the corresponding $N$-particle Markov processes has
merit. However, for typical nonlinear systems as in \eqref{EqLimitKolmogorov},
one does not have an explicit form for the stationary distribution of the
associated $N$-particle system, and thus the approach of computing limits of
$\tilde{F}_{N}$ as in (\ref{eq:firstlyap}) becomes infeasible. An alternative
is to consider the limits of
\begin{equation}
F_{t}^{N}(q)\doteq\frac{1}{N}R(\otimes^{N}q\Vert\boldsymbol{p}^{N}(t)),
\label{eq:fnq}%
\end{equation}
where $\boldsymbol{p}^{N}(t)$ is the (exchangeable) probability distribution
of $\boldsymbol{X}^{N}(t)$ with some exchangeable initial distribution
$\boldsymbol{p}^{N}(0)$ on $\mathcal{X}^{N}$. Formally taking the limit of
$F_{t}^{N}$, first as $t\rightarrow\infty$ and then as $N\rightarrow\infty$,
we arrive at the function $F$ introduced in (\ref{eq:firstlyap}). Since as we
have noted this limit cannot in general be evaluated, one may instead attempt
to evaluate the limit in the reverse order, i.e., send $N\rightarrow\infty$
first, followed by $t\rightarrow\infty$.

A basic question one then asks is whether the limit $\lim_{N\rightarrow\infty
}F_{t}^{N}(q)$ takes a useful form. In Section \ref{sec:asymgenexc} we will
answer this question in a rather general setting. Specifically, we show that
under suitable conditions the limit of $\frac{1}{N}R(\otimes^{N}q\Vert Q^{N})$
as $N\rightarrow\infty$ exists for every $q\in\mathcal{P}(\mathcal{X})$ and
exchangeable sequence $\{Q^{N}\}_{N\in\mathbb{N}},Q^{N}\in\mathcal{P}%
(\mathcal{X}^{N})$. The main condition needed is that the collection of
empirical measures of $N$ random variables with joint distribution $Q^{N}$
satisfies a locally uniform large deviation principle (LDP) on $\mathcal{P}%
(\mathcal{X})$. We show in this case that the limit of $\frac{1}{N}%
R(\otimes^{N}q\Vert Q^{N})$ is given by $J(q)$, where $J$ is the rate function
associated with the LDP. Applying this result to $Q^{N}=\boldsymbol{p}^{N}%
(t)$, we then identify the limit as $N\rightarrow\infty$ of $F_{t}^{N}(q)$ as
$J_{t}(q)$, where $J_{t}$ is the large deviations rate function for the
collection of $\mathcal{P}(\mathcal{X})$-valued random variables $\{\mu
^{N}(t)\}_{N\in\mathbb{N}}$. In the companion paper we will show that the
limit of $J_{t}(q)$ as $t\rightarrow\infty$ yields a Lyapunov function for
\eqref{EqLimitKolmogorov} for interesting models, including a class we call
\textquotedblleft locally Gibbs,\textquotedblright\ which generalizes those
obtained as limits of $N$-particle Gibbs models.

\subsection{Outline of the paper and common notation}

The paper is organized as follows. In Section \ref{SectInterMarkov} we
describe the interacting particle system model and the ODE that characterizes
its scaling limit. Section \ref{sec:descent} recalls the descent property of
relative entropy for (linear) Markov processes. Section \ref{sec:gibbstype}
studies systems of Gibbs type and shows how a Lyapunov function can be
obtained by evaluating limits of $\tilde{F}_{N}(q)$ as $N\rightarrow\infty$.
Next, in Section \ref{sec:genweak} we consider models more general than Gibbs
systems. In Section \ref{sec:asymgenexc}, we carry out an asymptotic analysis
of $\frac{1}{N}R(\otimes^{N}q\Vert Q^{N})$ as $N\rightarrow\infty$ for an
exchangeable sequence $\{Q^{N}\}_{N\in\mathbb{N}}$. The results of Section
\ref{sec:asymgenexc} are then used in Section \ref{sec:quasandmeta} to
evaluate $\lim_{N\rightarrow\infty}F_{t}^{N}(q)$. Section
\ref{sec:quasandmeta} also contains remarks on relations between the
constructed Lyapunov functions and the Freidlin-Wentzell quasipotential and
metastability issues for the underlying $N$-particle Markov process.

The following notation will be used. Given any Polish space $E$,
$D([0,\infty):E)$ denotes the space of $E$-valued right continuous functions
on $[0,\infty)$ with finite left limits on $(0,\infty)$, equipped with the
usual Skorohod topology. Weak convergence of a sequence $\{X_{n}\}$ of
$E$-valued random variables to a random variable $X$ is denoted by
$X_{n}\Rightarrow X$. The cardinality of a finite set $C$ is denoted by $|C|$.

\section{Background and Model Description}

\label{SectInterMarkov}

\subsection{Description of the $N$-particle system}

\label{subs-model}

In this section, we provide a precise description of the time-homogeneous
${\mathcal{X}}^{N}$-valued Markov process ${\boldsymbol{X}}^{N}=(X^{1,N}%
,\ldots,X^{N,N})$ that describes the evolution of the $N$-particle system. We
assume that at most one particle can change state at any given time. Models
for which more than one particle can change state simultaneously are also
common \cite{AntFriRobTib08, GibHunKel90, DupRamWu12}. However, under broad
conditions the limit \eqref{EqLimitKolmogorov} for such models also has an
interpretation as the forward equation of a model in which only one particle
can change state at any time \cite{DupRamWu12}, and so for purposes of
stability analysis of \eqref{EqLimitKolmogorov} this assumption is not much of
a restriction.

Recall that ${\mathcal{X}}$ is the finite set $\{1,\ldots,d\}$. The
transitions of ${\boldsymbol{X}}^{N}$ are determined by a family of matrices
$\{\Gamma^{N}(r)\}_{r\in{\mathcal{P}}({\mathcal{X}})}$, where for each
$r\in{\mathcal{P}}({\mathcal{X}})$, $\Gamma^{N}(r)=\{\Gamma_{x,y}%
^{N}(r),x,y\in{\mathcal{X}}\}$ is a transition rate matrix of a continuous
time Markov chain on ${\mathcal{X}}$. For $y\neq x$, $\Gamma_{xy}^{N}(r)\geq0$
represents the rate at which a single particle transitions from state $x$ to
state $y$ when the empirical measure has value $r$. More precisely, the
transition mechanism of $\boldsymbol{X}^{N}$ is as follows. Given
$\boldsymbol{X}^{N}(t)=\boldsymbol{x}\in\mathcal{X}^{N}$, an index
$i\in\left\{  1,\ldots,N\right\}  $ and $y\neq x_{i}$, the jump rate at time
$t$ for the transition
\[
\left(  x_{1},\ldots,x_{i-1},x_{i},x_{i+1},\ldots,x_{N}\right)  \rightarrow
\left(  x_{1},\ldots,x_{i-1},y,x_{i+1},\ldots,x_{N}\right)
\]
is $\Gamma_{x_{i}y}^{N}(r^{N}(\boldsymbol{x}))$, where $r^{N}(\boldsymbol{x}%
)\in{\mathcal{P}}_{N}({\mathcal{X}})$ is the empirical measure of the vector
$\boldsymbol{x}\in{\mathcal{X}}^{N}$, which is given explicitly by
\begin{equation}
r_{y}^{N}(\boldsymbol{x})\doteq\frac{1}{N}\sum_{i=1}^{N}1_{\{x_{i}=y\}},\qquad
y\in{\mathcal{X}}. \label{def-rn}%
\end{equation}
Moreover, the jump rates for transitions of any other type are zero. Note that
$r_{\cdot}^{N}(\boldsymbol{X}^{N}(t))$ equals the empirical measure $\mu
^{N}(t)(\cdot)$, defined in (\ref{def-mun}).

The description in the last paragraph completely specifies the infinitesimal
generator or rate matrix of the $\mathcal{X}^{N}$-valued Markov process
${\boldsymbol{X}}^{N}$, which we will denote throughout by $\boldsymbol{\Psi
}^{N}$. Note that the sample paths of ${\boldsymbol{X}}^{N}$ lie in
$D([0,\infty):\mathcal{X}^{N})$, where ${\mathcal{X}}^{N}$ is endowed with the
discrete topology. The generator $\boldsymbol{\Psi}^{N},$ together with a
collection of $\mathcal{X}$-valued random variables $\{X^{i,N}%
(0)\}_{i=1,\ldots,N}$ whose distribution we take to be exchangeable,
determines the law of $\boldsymbol{X}^{N}$.

\subsection{The jump Markov process for the empirical measure}

\label{subs-jmp}

As noted in Section \ref{sec:intro}, exchangeability of the initial random
vector
\[
\{X^{i,N}(0),i=1,\ldots,N\}
\]
implies that the processes $\{X^{i,N}\}_{i=1,\ldots,N}$ are also exchangeable.
From this, it follows that the empirical measure process $\mu^{N}=\{\mu
^{N}(t)\}_{t\geq0}$ is a Markov chain taking values in ${\mathcal{P}}%
_{N}({\mathcal{X}})$. We now describe the evolution of this measure-valued
Markov chain. For $x\in{\mathcal{X}}$, let $e_{x}$ denotes the unit coordinate
vector in the $x$-direction in $\mathbb{R}^{d}$. Since almost surely at most
one particle can change state at any given time, the possible jumps of $\mu^N$ are of the
form $v/N,v\in{\mathcal{V}}$, where
\begin{equation}
{\mathcal{V}}\doteq\left\{  e_{y}-e_{x}:x,y\in{\mathcal{X}}:x\neq y\right\}  .
\label{def-jumpset}%
\end{equation}
Moreover, if $\mu^{N}(t)=r$ for some $r\in{\mathcal{P}}_{N}({\mathcal{X}})$,
then at time $t$, $Nr_{x}$ of the particles are in the state $x$. Therefore,
the rate of the particular transition $r\rightarrow r+(e_{y}-e_{x})/N$ is
$Nr_{x}\Gamma_{xy}^{N}(r)$. Consequently the generator ${\mathcal{L}}^{N}$ of
the jump Markov process $\mu^{N}$ is given by
\begin{equation}
{\mathcal{L}}^{N}f(r)=\sum_{x,y\in{\mathcal{X}}:x\neq y}Nr_{x}\Gamma_{xy}%
^{N}(r)\left[  f\left(  r+\frac{1}{N}(e_{y}-e_{x})\right)  -f(r)\right]
\label{eq:gen}%
\end{equation}
for real-valued functions $f$ on ${\mathcal{P}}_{N}({\mathcal{X}})$.

\subsection{Law of large numbers limit}

\label{subs-lln}

We now characterize the law of large numbers limit of the sequence $\{\mu
^{N}\}_{N\in\mathbb{N}}$. It will be convenient to identify ${\mathcal{P}%
}({\mathcal{X}})$ with the $(d-1)$-dimensional simplex ${\mathcal{S}}$ in
$\mathbb{R}^{d}$, given by
\begin{equation}
{\mathcal{S}}\doteq\left\{  x\in\mathbb{R}^{d}:\sum_{i=1}^{d}x_{i}=1,x_{i}%
\geq0,i=1,\ldots,d\right\}  , \label{simplex}%
\end{equation}
and identify ${\mathcal{P}}_{N}({\mathcal{X}})$ with ${\mathcal{S}}_{N}%
\doteq{\mathcal{S}}\cap\frac{1}{N}\mathbb{Z}^{d}$. We use ${\mathcal{P}%
}({\mathcal{X}})$ and ${\mathcal{S}}$ (likewise, ${\mathcal{P}}_{N}%
({\mathcal{X}})$ and ${\mathcal{S}}_{N}$) interchangeably. We endow
${\mathcal{S}}$ with the usual Euclidean topology and note that this
corresponds to ${\mathcal{P}}({\mathcal{X}})$ endowed with the topology of
weak convergence. We also let ${\mathcal{S}}^{\circ}$ denote the relative
interior of ${\mathcal{S}}$.

\begin{condition}
\label{ass-llnbasic} For every pair $x,y\in{\mathcal{X}}$, $x\neq y$, there
exists a Lipschitz continuous function $\Gamma_{xy}:{\mathcal{S}}%
\rightarrow\lbrack0,\infty)$ such that $\Gamma_{xy}^{N}\rightarrow\Gamma_{xy}$
uniformly on ${\mathcal{S}}$.
\end{condition}

We will find it convenient to define $\Gamma_{xx}(r)\doteq-\sum_{y\in
{\mathcal{X}},y\neq x}\Gamma_{yx}(r)$, so that $\Gamma(r)$ can be viewed as a
$d\times d$ transition rate matrix of a jump Markov process on ${\mathcal{X}}$.

Laws of large numbers for the empirical measures of interacting processes can
be efficiently established using a martingale problem formulation, see for
instance \cite{Oes84}. Since $\mathcal{X}$ is finite, in the present situation
we can rely on a classical convergence theorem for pure jump Markov processes
with state space contained in a Euclidean space.

\begin{theorem}
\label{ThLLN} Suppose that Condition \ref{ass-llnbasic} holds, and assume that
$\mu^{N}(0)$ converges in probability to $q\in\mathcal{P}(\mathcal{X})$ as $N$
tends to infinity. Then $\{\mu^{N}(\cdot)\}_{N\in\mathbb{N}}$ converges
uniformly on compact time intervals in probability to $p(\cdot)$, where
$p(\cdot)$ is the unique solution to \eqref{EqLimitKolmogorov} with $p(0)=q$.
\end{theorem}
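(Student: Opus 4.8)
The plan is to establish this law of large numbers via the standard martingale-problem route for pure jump Markov processes on a Euclidean state space, treating $\mu^N$ as an $\mathcal{S}_N$-valued process whose generator is given by \eqref{eq:gen}. First I would fix a smooth test function, for instance the coordinate maps $f_y(r) = r_y$, and apply the generator to obtain the mean drift. Using \eqref{eq:gen}, a direct computation gives
\begin{equation}
\mathcal{L}^N f_y(r) = \sum_{x \neq y} r_x \Gamma^N_{xy}(r) - \sum_{x \neq y} r_y \Gamma^N_{yx}(r) = \left( r \Gamma^N(r) \right)_y,
\end{equation}
so that each coordinate $\mu^N_y(t)$ admits the semimartingale decomposition $\mu^N_y(t) = \mu^N_y(0) + \int_0^t (\mu^N(s)\Gamma^N(\mu^N(s)))_y\, ds + M^{N,y}(t)$, where $M^{N,y}$ is a martingale. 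The key point is that the drift is, up to the uniform error $\Gamma^N - \Gamma$ controlled by Condition \ref{ass-llnbasic}, exactly the right-hand side of the limit ODE \eqref{EqLimitKolmogorov}.

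Next I would show the martingale part is asymptotically negligible. Since jumps of $\mu^N$ have size $1/N$ and the total jump rate out of any configuration is bounded (the rates $\Gamma^N_{xy}$ are uniformly bounded because they converge uniformly to continuous functions on the compact set $\mathcal{S}$, and $N r_x \le N$ contributes an $O(N)$ rate, leaving an $O(1/N)$ contribution to the quadratic variation), the predictable quadratic variation $\langle M^{N,y} \rangle_t$ is of order $1/N$ uniformly on compact time intervals. A Doob inequality estimate then yields $\E[\sup_{s \le T} |M^{N,y}(s)|^2] \to 0$, so the martingale terms vanish in probability uniformly on $[0,T]$. Combined with the convergence in probability of $\mu^N(0)$ to $q$, this produces tightness of $\{\mu^N\}$ in $D([0,\infty):\mathcal{S})$ and shows that any weak limit point satisfies the integral equation $p(t) = q + \int_0^t p(s)\Gamma(p(s))\, ds$.

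Finally I would invoke uniqueness for \eqref{EqLimitKolmogorov}: because each $\Gamma_{xy}$ is Lipschitz on $\mathcal{S}$ and $\mathcal{S}$ is compact, the vector field $p \mapsto p\Gamma(p)$ is Lipschitz, so the ODE has a unique solution with $p(0)=q$, and this solution stays in $\mathcal{S}$ (the simplex is invariant since $\Gamma(p)$ is a rate matrix). Uniqueness of the limit equation forces the whole sequence to converge, and the convergence upgrades from weak convergence to convergence in probability because the limit is deterministic; uniform convergence on compacts follows from the continuity of $t \mapsto p(t)$ together with the Skorohod-to-uniform promotion valid when the limit has continuous paths. I expect the main technical obstacle to be the careful verification of tightness and the uniform control of the martingale term, since one must check that the compensator and the quadratic variation are genuinely uniformly bounded over the (noncompact in general) time horizon and over the discrete state space $\mathcal{S}_N$; however, because $\mathcal{X}$ is finite and the rates are uniformly bounded, this reduces to routine estimates. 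As the excerpt notes, this is really an application of a classical convergence theorem for pure jump Markov processes (e.g., the results surrounding the martingale problem as in Ethier--Kurtz or Oelschläger \cite{Oes84}), so the proof is mainly a matter of verifying the hypotheses of that theorem.
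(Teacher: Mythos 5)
Your proposal is correct, but it takes a more self-contained route than the paper, whose entire proof consists of verifying the hypotheses of Theorem~2.11 of Kurtz \cite{Kur70}: the paper defines $F_{N}(p)=\sum_{x,y}Np_{x}(\tfrac{1}{N}e_{y}-\tfrac{1}{N}e_{x})\Gamma^{N}_{x,y}(p)$ and $F(p)=\sum_{x,y}p_{x}(e_{y}-e_{x})\Gamma_{x,y}(p)$, observes that $F_{N}(p)=\mathcal{L}^{N}f(p)$ for the identity map $f$ and that $\frac{d}{dt}p=F(p)$ coincides with \eqref{EqLimitKolmogorov}, notes $F$ is Lipschitz by Condition \ref{ass-llnbasic} so the ODE is uniquely solvable, and then cites Kurtz. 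Your drift computation $\mathcal{L}^{N}f_{y}(r)=(r\Gamma^{N}(r))_{y}$ is exactly this identification, and your Lipschitz/uniqueness step matches the paper's; where you diverge is that you reprove the relevant special case of Kurtz's theorem from scratch via the semimartingale decomposition, the $O(1/N)$ predictable quadratic variation bound, Doob's inequality, tightness, and identification of limit points through the integral equation. Your route buys transparency and explicit error control (including the correct observation that convergence in probability follows because the limit is deterministic, and that Skorohod convergence upgrades to uniform convergence on compacts since the limit path is continuous), at the cost of length; the paper's citation buys brevity at the cost of a black box. Two small points you should tighten if writing this out in full: tightness does not follow from the martingale estimate alone --- you need to add that the drift is uniformly bounded (rates uniformly bounded since $\Gamma^{N}_{xy}\to\Gamma_{xy}$ uniformly on the compact set $\mathcal{S}$), so increments are $O(|t-s|)$ plus a uniformly vanishing martingale, giving C-tightness; and invariance of $\mathcal{S}$ for the limit ODE deserves a line (mass is conserved because rows of $\Gamma(p)$ sum to zero, and $\dot p_{x}=\sum_{y\neq x}p_{y}\Gamma_{yx}(p)\geq 0$ whenever $p_{x}=0$, so the vector field points inward on the boundary).
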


\begin{proof}
The assertion follows from Theorem~2.11 in \cite{Kur70}. In the notation of
that work, $E=\mathcal{P}(\mathcal{X})$, $E_{N}=\mathcal{P}_{N}(\mathcal{X})$,
$N\in\mathbb{N}$,
\begin{align*}
&  F_{N}(p)=\sum_{x,y\in\mathcal{X}}N\cdot p_{x}\left(  \tfrac{1}{N}%
e_{y}-\tfrac{1}{N}e_{x}\right)  \Gamma_{x,y}^{N}(p), &  &  p\in E_{N},\\
&  F(p)=\sum_{x,y\in\mathcal{X}}p_{x}(e_{y}-e_{x})\Gamma_{x,y}(p), &  &  p\in
E,
\end{align*}
and we recall $e_{x}$ is the unit vector in $\mathbb{R}^{d}$ with component
$x$ equal to $1$. Note that if $f$ is the identity function $f(\tilde
{p})\doteq\tilde{p}\in\mathbb{R}^{d}$, then $F_{N}(p)=\mathcal{L}^{N}f(p)$,
$p\in\mathcal{P}_{N}(\mathcal{X})$, where $\mathcal{L}^{N}$ is the generator
given in (\ref{eq:gen}). Moreover, the $z$-th component of the $d$-dimensional
vector $F(p)$ is equal to $\sum_{x: x\neq z}p_{x}\Gamma_{x,z}(p)-\sum_{y:
y\neq z}p_{z}\Gamma_{z,y}(p)$, which in turn is equal to $\sum_{x}p_{x}%
\Gamma_{x,z}(p)$, the $z$-th component of the row vector $p\Gamma(p)$. The ODE
$\frac{d}{dt}p(t)=F(p(t))$ is therefore the same as \eqref{EqLimitKolmogorov}.
Since $F$ is Lipschitz continuous by Condition \ref{ass-llnbasic}, this ODE
has a unique solution.  The proof is now immediate from Theorem~2.11 in \cite{Kur70}.
\end{proof}

\medskip The solution to (\ref{EqLimitKolmogorov}) has a stochastic
representation. Given a probability measure $q(0)\in$ ${\mathcal{P}%
}(\mathcal{X})$, one can construct a process $X$ with sample paths in
$D([0,T]:\mathcal{X})$ such that for all functions $f:\mathcal{X}%
\rightarrow\mathbb{R}$,
\[
f(X(t))-f(X(0))-\int_{0}^{t}\sum_{y\in\mathcal{X}}\Gamma_{X(s)y}(q(s))f(y)ds
\]
is a martingale, where $q(t)$ denotes the probability distribution of $X(t)$,
$t\geq0$. Furthermore, $X$ is unique in law. Note that the rate matrix of
$X(t)$ is time inhomogeneous and equal to $\Gamma(q(t))$, with $q_{x}(t)$
$=P\left\{  X(t)=x\right\}  $. Because the evolution of $X$ at time $t$
depends on the distribution of $X(t)$, this process is called a nonlinear
Markov process. Note that $q(t)$ also solves (\ref{EqLimitKolmogorov}), and so
if $q(0)=p(0)$, by uniqueness $p_{x}(t)=P\left\{  X(t)=x\right\}  $. One can
show that, under the conditions of Theorem \ref{ThLLN}, $X(\cdot)$ is the
limit in distribution of $X^{i,N}(\cdot)$ for any fixed $i$, as $N\rightarrow
\infty$ (see Proposition 2.2 of \cite{Szn91}).

A fundamental property of interacting systems that will play a role in the
discussion below is propagation of chaos; see \cite{gottlieb98} for an
exposition and characterization. Propagation of chaos means that the first $k$
components of the $N$-particle system over any finite time interval will be
asymptotically independent and identically distributed (i.i.d.) as $N$ tends
to infinity, whenever the initial distributions of all components are
asymptotically i.i.d. In the present context, propagation of chaos for the
family $(\boldsymbol{X}^{N})_{N\in\mathbb{N}}$ (or $\{\Psi^{N}\}_{N\in
\mathbb{N}}$) means the following. For $t\geq0$ denote the probability
distribution of $(X^{1,N}(t),\ldots,X^{k,N}(t))$ by $\boldsymbol{p}^{N,k}(t)$.
If $q\in\mathcal{P}(\mathcal{X})$ and if for all $k\in\mathbb{N}$
$\boldsymbol{p}^{N,k}(0)$ converges weakly to the product measure $\otimes
^{k}q$ as $N\rightarrow\infty$, then for all $k\in N$ and all $t\geq0$
$\boldsymbol{p}^{N,k}(t)$ converges weakly to $\otimes^{k}p(t)$, where
$p(\cdot)$ is the solution to \eqref{EqLimitKolmogorov} with $p(0)=q$. Instead
of a particular time $t$ a finite time interval may be considered. Under the
assumptions of Theorem~\ref{ThLLN}, propagation of chaos holds for the family
of $N$-particle systems determined by $\{\Psi^{N}\}_{N\in\mathbb{N}}$. See,
for instance, Theorem 4.1 in \cite{graham92}.

\section{Descent Property of Relative Entropy for Markov Processes}

\label{sec:descent}

We next discuss an important property of the usual (linear) Markov processes.
As noted in the introduction, various features of the deterministic system
(\ref{EqLimitKolmogorov}) make standard forms of Lyapunov functions that might
be considered unsuitable. Indeed, one of the most challenging problems in the
construction of Lyapunov functions for any system is the identification of
natural forms that reflect the particular features and structure of the system.

The ODE (\ref{EqLimitKolmogorov}) is naturally related to a flow of
probability measures, and for this reason one might consider constructions
based on relative entropy. It is known that for an ergodic linear Markov
process relative entropy serves as a Lyapunov function. Specifically, relative
entropy has a descent property along the solution of the forward equation. The
earliest proof in the setting of finite-state continuous-time Markov processes
the authors have been able to locate is \cite[pp.\,I-16-17]{Spi71}. Since
analogous arguments will be used elsewhere (see Section 2 of
\cite{BDFR-Examples}), we give the proof of this fact. Let $G=(G_{x,y}%
)_{x,y\in\mathcal{X}}$ be an irreducible rate matrix over the finite state
space $\mathcal{X}$, and denote its unique stationary distribution by $\pi$.
The forward equation for the family of Markov processes with rate matrix $G$
is the linear ODE
\begin{equation}
\frac{d}{dt}r(t)=r(t)G. \label{EqLinearKolmogorov}%
\end{equation}
Define $\ell:$ $[0,\infty)\rightarrow\lbrack0,\infty)$ by $\ell(z)\doteq
z\log z-z+1$. Recall that the relative entropy of $p\in\mathcal{P}%
(\mathcal{X})$ with respect to $q\in\mathcal{P}(\mathcal{X})$ is given by
\begin{equation}
R\left(  p\Vert q\right)  \doteq\sum_{x\in\mathcal{X}}p_{x}\log\left(
\frac{p_{x}}{q_{x}}\right)  =\sum_{x\in\mathcal{X}}q_{x}\ell\left(
\frac{p_{x}}{q_{x}}\right)  . \label{eq:relent}%
\end{equation}

\begin{lemma}
\label{LemmaREDescent} Let $p(\cdot)$, $q(\cdot)$ be solutions to
\eqref{EqLinearKolmogorov} with initial conditions $p(0),q(0)\in
\mathcal{P}(\mathcal{X})$. Then for all $t>0$,
\[
\frac{d}{dt}R\left(  p(t)\Vert q(t)\right)  =-\hspace{-1ex}\sum_{x,y\in
\mathcal{X}:x\neq y}\ell\left(  \frac{p_{y}(t)q_{x}(t)}{p_{x}(t)q_{y}%
(t)}\right)  p_{x}(t)\frac{q_{y}(t)}{q_{x}(t)}G_{y,x}\leq0.
\]
Moreover, $\frac{d}{dt}R\left(  p(t)\Vert q(t)\right)  =0$ if and only if
$p(t)=q(t)$.
\end{lemma}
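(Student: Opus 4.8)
The plan is to differentiate $t \mapsto R\left(p(t)\Vert q(t)\right)$ directly and reshape the result into the stated double sum. First I would record two preliminary facts. Since $G$ is irreducible, the transition semigroup $e^{tG}$ has strictly positive entries for every $t>0$, so both $p(t)$ and $q(t)$ lie in ${\mathcal S}^{\circ}$ for $t>0$; this guarantees that the ratios $u_x \doteq p_x(t)/q_x(t)$ and every argument appearing in the claimed formula are well defined and finite, which is why the statement is restricted to $t>0$. Second, because every row of a rate matrix sums to zero, $\sum_x \dot p_x(t) = \sum_x \dot q_x(t) = 0$, and more generally $\sum_x G_{y,x} = 0$ for each fixed $y$; the latter identity is the workhorse of the computation.

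Next I would carry out the differentiation using the representation $R(p\Vert q) = \sum_x q_x \ell(u_x)$ from \eqref{eq:relent}, together with $\ell'(z) = \log z$ and the relation $q_x \dot u_x = \dot p_x - u_x \dot q_x$. After substituting $\dot p_x = \sum_y p_y G_{y,x}$, $\dot q_x = \sum_y q_y G_{y,x}$ and using $\sum_x \dot q_x = 0$ to cancel the contributions from $\ell(u_x) = u_x\log u_x - u_x + 1$, the derivative collapses to
\[
\frac{d}{dt}R\left(p(t)\Vert q(t)\right) = \sum_{x,y\in\mathcal{X}} q_y\, G_{y,x}\bigl(u_y \log u_x - u_x\bigr),
\]
where the sum now runs over all ordered pairs, including $x = y$.

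The key manipulation is then to add the quantity $\sum_{x,y} q_y G_{y,x}\bigl(u_y - u_y\log u_y\bigr)$, which vanishes since $\sum_x G_{y,x} = 0$ for each $y$ and the bracket depends only on $y$. This both removes the diagonal terms and reshapes the summand into $q_y G_{y,x}\bigl(u_y\log(u_x/u_y) - u_x + u_y\bigr)$. Writing $a = u_x/u_y$ and using the elementary identity $a - 1 - \log a = a\,\ell(1/a)$, the bracket equals $-u_x\,\ell(u_y/u_x)$; substituting $u_x = p_x/q_x$ and $u_y/u_x = p_y q_x/(p_x q_y)$ produces exactly the claimed expression, with each off-diagonal term of the form $\ell(\cdot)\, p_x (q_y/q_x) G_{y,x}$.

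Finally, non-positivity is immediate because $\ell \ge 0$ on $[0,\infty)$ and each prefactor $p_x (q_y/q_x) G_{y,x}$ is non-negative. For the equality case, $\frac{d}{dt}R = 0$ forces $\ell\bigl(p_y q_x/(p_x q_y)\bigr) = 0$, hence $u_x = u_y$, for every pair with $G_{y,x} > 0$; irreducibility makes the underlying graph connected, so $u_{\cdot}$ is constant, and the normalization $\sum_x p_x = \sum_x q_x = 1$ forces this constant to be $1$, i.e.\ $p(t) = q(t)$, the converse being clear. I expect the main obstacle to be purely algebraic: spotting the ``add a vanishing sum'' trick that simultaneously kills the diagonal terms and converts the bracket into the correct $\ell$-form, rather than grinding through a less structured index rearrangement.
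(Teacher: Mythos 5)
Your proof is correct and follows essentially the same route as the paper's: both differentiate the relative entropy directly along the flow, insert a sum that vanishes because $\sum_{x\in\mathcal{X}}G_{y,x}=0$ to recast the summand as $\ell$ evaluated at $p_y(t)q_x(t)/(p_x(t)q_y(t))$, and settle the equality case via the same irreducibility chain argument, with positivity of all components for $t>0$ justifying the manipulations. Your starting from the representation $R(p\Vert q)=\sum_{x}q_x\,\ell(p_x/q_x)$ instead of $\sum_{x}p_x\log(p_x/q_x)$ is only a cosmetic variation on the paper's computation.
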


\begin{proof}
\vspace{0pt} \noindent It is well known (and easy to check) that $\ell$ is
strictly convex on $[0,\infty)$, with $\ell(0)=1$ and $\ell(z)=0$ if and only
if $z=1$. Owing to the irreducibility of $G$, for $t>0$ $p(t)$ and $q(t)$ have
no zero components and hence are equivalent probability vectors. By
assumption, $p_{x}^{\prime}(t)\doteq\frac{d}{dt}p_{x}(t)=\sum_{y\in
\mathcal{X}}p_{y}(t)G_{y,x}$ for all $x\in\mathcal{X}$ and all $t\geq0$, and
similarly for $q(t)$. Thus for $t>0$
\begin{align*}
\lefteqn{\frac{d}{dt}R\left(  p(t)\Vert q(t)\right)  }\\
&  \quad=\frac{d}{dt}\sum_{x\in\mathcal{X}}p_{x}(t)\log\left(  \frac{p_{x}%
(t)}{q_{x}(t)}\right) \\
&  \quad=\sum_{x\in\mathcal{X}}p_{x}^{\prime}(t)\log\left(  \frac{p_{x}%
(t)}{q_{x}(t)}\right)  +\sum_{x\in\mathcal{X}}p_{x}^{\prime}(t)-\sum
_{x\in\mathcal{X}}p_{x}(t)\frac{q_{x}^{\prime}(t)}{q_{x}(t)}\\
&  \quad=\sum_{x,y\in\mathcal{X}}\left(  p_{y}(t)+p_{y}(t)\log\left(
\frac{p_{x}(t)}{q_{x}(t)}\right)  -p_{x}(t)\frac{q_{y}(t)}{q_{x}(t)}\right)
G_{y,x}\\
&  \quad\quad\mbox{}-\sum_{x,y\in\mathcal{X}}p_{y}(t)\log\left(  \frac
{p_{y}(t)}{q_{y}(t)}\right)  G_{y,x},
\end{align*}
where the last equality follows from the fact that, since $G$ is a rate
matrix, $\sum_{x\in\mathcal{X}}G_{y,x}=0$ for all $y\in\mathcal{X}$.
Rearranging terms we have
\begin{align*}
\lefteqn{\frac{d}{dt}R\left(  p(t)\Vert q(t)\right)  }\\
&  \quad=\sum_{x,y\in\mathcal{X}}\left(  p_{y}(t)-p_{y}(t)\log\left(
\frac{p_{y}(t)q_{x}(t)}{p_{x}(t)q_{y}(t)}\right)  -p_{x}(t)\frac{q_{y}%
(t)}{q_{x}(t)}\right)  G_{y,x}\\
&  \quad=\sum_{x,y\in\mathcal{X}}\left(  \frac{p_{y}(t)q_{x}(t)}{p_{x}%
(t)q_{y}(t)}-\frac{p_{y}(t)q_{x}(t)}{p_{x}(t)q_{y}(t)}\log\left(  \frac
{p_{y}(t)q_{x}(t)}{p_{x}(t)q_{y}(t)}\right)  -1\right)  p_{x}(t)\frac
{q_{y}(t)}{q_{x}(t)}G_{y,x}\\
&  \quad=\mbox{}-\hspace{-1ex}\sum_{x,y\in\mathcal{X}:x\neq y}\ell\left(
\frac{p_{y}(t)q_{x}(t)}{p_{x}(t)q_{y}(t)}\right)  p_{x}(t)\frac{q_{y}%
(t)}{q_{x}(t)}G_{y,x}.
\end{align*}
Recall that $\ell\geq0$, that for $t>0$ $q_{x}(t)>0$ and $p_{x}(t)>0$ for all
$x\in\mathcal{X}$, and that $G_{y,x}\geq0$ for all $x\neq y$. It follows that
$\frac{d}{dt}R(p(t)\Vert q(t))\leq0$.

It remains to show that $\frac{d}{dt}R(p(t)\Vert q(t))=0$ if and only if
$p(t)=q(t)$. We claim this follows from the fact that $\ell\geq0$ with
$\ell(z)=0$ if and only if $z=1$, and from the irreducibility of $G$. Indeed,
$p(t)=q(t)$ if and only if $\frac{p_{y}(t)q_{x}(t)}{p_{x}(t)q_{y}(t)}=1$ for
all $x,y\in\mathcal{X}$ with $x\neq y$. Thus $p(t)=q(t)$ implies $\frac{d}%
{dt}R(p(t)\Vert q(t))=0$. If $\frac{d}{dt}R(p(t)\Vert q(t))=0$ then
immediately $\frac{p_{y}(t)q_{x}(t)}{p_{x}(t)q_{y}(t)}=1$ for all
$x,y\in\mathcal{X}$ such that $G_{y,x}>0$. If $y$ does not directly
communicate with $x$ then, by irreducibility, there is a chain of directly
communicating states leading from $y$ to $x$, and using those states it
follows that $\frac{p_{y}(t)q_{x}(t)}{p_{x}(t)q_{y}(t)}=1$.
\end{proof}

\medskip If $q(0)=\pi$ then, by stationarity, $q(t)=\pi$ for all $t\geq0$.
Lemma~\ref{LemmaREDescent} then implies that the mapping
\begin{equation}
p\mapsto R\left(  p\Vert\pi\right)  \label{ExLyapunovLinStat}%
\end{equation}
is a local (and also global) Lyapunov function (cf. Definition 2.4 in
\cite{BDFR-Examples}) for the linear forward equation
\eqref{EqLinearKolmogorov} on any relatively open subset of $\mathcal{S}$ that
contains $\pi$.

This is, however, just one of many ways that relative entropy can be used to
define Lyapunov functions. For example, Lemma~\ref{LemmaREDescent} also
implies%
\begin{equation}
p\mapsto R\left(  \pi\Vert p\right)  \label{eq:eqrevrelent}%
\end{equation}
is a local and global Lyapunov function for \eqref{EqLinearKolmogorov}. Yet a
third can be constructed as follows. Let $T>0$ and consider the mapping
\begin{equation}
p\mapsto R\left(  p\Vert q^{p}(T)\right)  , \label{ExLyapunovLinTime}%
\end{equation}
where $q^{p}(\cdot)$ is the solution to \eqref{EqLinearKolmogorov} with
$q^{p}(0)=p$. Lemma~\ref{LemmaREDescent} also implies that the mapping given
by \eqref{ExLyapunovLinTime} is a Lyapunov function for
\eqref{EqLinearKolmogorov}. This is because $R\left(  p(t)\Vert q^{p(t)}%
(T)\right)  =R\left(  p(t)\Vert q(t)\right)  $, where $q(\cdot)$ is the
solution to \eqref{EqLinearKolmogorov} with $q(0)=p(T)$, thus
$q(t)=p(T+t)=q^{p(t)}(T)$. Note that \eqref{ExLyapunovLinStat} arises as the
limit of \eqref{ExLyapunovLinTime} as $T$ goes to infinity.

The proof of the descent property in Lemma \ref{LemmaREDescent} crucially uses
the fact that $p(\cdot)$ and $q(\cdot)$ satisfy a forward equation with
respect to the same fixed rate matrix, and therefore for general nonlinear
Markov processes one does not expect relative entropy to serve directly as a
Lyapunov function. However, one might conjecture this to be true if the
nonlinearity is in some sense weak, and a result of this type is presented in
the companion paper \cite{BDFR-Examples} (see Section 3 therein). For more
general settings our approach will be to consider functions such as those in
\eqref{ExLyapunovLinStat} and \eqref{ExLyapunovLinTime} associated with the
$N$-particle Markov processes and then take a suitable scaling limit as
$N\rightarrow\infty$. The issue is somewhat subtle, e.g., while this approach
is feasible with the form \eqref{ExLyapunovLinStat} it is not feasible when
the form \eqref{eq:eqrevrelent} is used, even though both define Lyapunov
functions in the linear case. For further discussion on this point we refer to
Remark \ref{rem:notrev}.

\section{Systems of Gibbs Type}

\label{sec:gibbstype}

In this section we evaluate the limit in \eqref{eq:firstlyap} for a family of
interacting $N$-particle systems with an explicit stationary distribution.
This limit is shown to be a Lyapunov function in \cite{BDFR-Examples}.
Section~\ref{SectGibbsSystems} introduces the class of weakly interacting
Markov processes and the corresponding nonlinear Markov processes. The
construction starts from the definition of the stationary distribution as a
Gibbs measure for the $N$-particle system. In Section~\ref{SectGibbsRELimit}
we derive candidate Lyapunov functions for the limit systems as limits of
relative entropy.

\subsection{The prelimit and limit systems}

\label{SectGibbsSystems}

Recall that $\mathcal{X}$ is a finite set with $d\geq2$ elements. Let
$K:\mathcal{X}\times\mathbb{R}^{d}\rightarrow\mathbb{R}$ be such that for each
$x\in\mathcal{X}$, $K(x,\cdot)$ is twice continuously differentiable. For
$(x,p)\in\mathcal{X}\times\mathbb{R}^{d}$, we often write $K(x,p)$ as
$K^{x}(p)$. Consider the probability measure $\boldsymbol{\pi}_{N}$ on
$\mathcal{X}^{N}$ defined by
\begin{equation}
\boldsymbol{\pi}_{N}(\boldsymbol{x})\doteq\frac{1}{Z_{N}}\exp\left(
-U_{N}(\boldsymbol{x})\right)  ,\;\boldsymbol{x}\in\mathcal{X}^{N},
\label{eq:gibbsmzr}%
\end{equation}
where $Z_{N}$ is the normalization constant,
\begin{equation}
U_{N}(\boldsymbol{x})\doteq\sum_{i=1}^{N}K(x_{i},r^{N}(\boldsymbol{x}%
)),\;\boldsymbol{x}=(x_{1},\ldots x_{N})\in\mathcal{X}^{N}, \label{eq:norm}%
\end{equation}
and $r^{N}(\boldsymbol{x})$ is the empirical measure of $\boldsymbol{x}$ and
was defined in \eqref{def-rn} (recall we identify an element of $\mathcal{P}%
(\mathcal{X})$ with a vector in $\mathcal{S}$).

A particular example of $K$ that has been extensively studied is given by
\begin{equation}
K(x,p)\doteq V(x)+\beta\sum_{y\in\mathcal{X}}W(x,y)p_{y},\;(x,p)\in
\mathcal{X}\times\mathbb{R}^{d}, \label{eq:affinegibbs}%
\end{equation}
where $V:\mathcal{X}\rightarrow\mathbb{R}$ is referred to as the
\emph{environment potential}, $W:\mathcal{X}\times\mathcal{X}\rightarrow
\mathbb{R}$ the \emph{interaction potential}, and $\beta>0$ the
\emph{interaction parameter}. In this case $U_{N}$, referred to as the
$N$-particle energy function, takes the form
\[
U_{N}(\boldsymbol{x})=\sum_{i=1}^{N}V(x_{i})+\frac{\beta}{N}\sum_{i=1}^{N}%
\sum_{j=1}^{N}W(x_{i},x_{j}).
\]

There are standard methods for identifying $\mathcal{X}^{N}$-valued Markov
processes for which $\boldsymbol{\pi}_{N}$ is the stationary distribution. The
resulting rate matrices are often called Glauber dynamics; see, for instance,
\cite{stroock05} or \cite{martinelli99}. To be precise, we seek an
$\mathcal{X}^{N}$-valued Markov process which has the structure of a weakly
interacting $N$-particle system and is reversible with respect to
$\boldsymbol{\pi}_{N}$.

Let $(\alpha(x,y))_{x,y\in\mathcal{X}}$ be an irreducible and symmetric matrix
with diagonal entries equal to zero and off-diagonal entries either one or
zero. $A$ will identify those states of a single particle that can be reached
in one jump from any given state. For $N\in\mathbb{N}$, define a matrix
$\boldsymbol{A}_{N}=(\boldsymbol{A}_{N}(\boldsymbol{x},\boldsymbol{y}%
))_{\boldsymbol{x},\boldsymbol{y}\in\mathcal{X}^{N}}$ indexed by elements of
$\mathcal{X}^{N}$ according to $\boldsymbol{A}_{N}(\boldsymbol{x}%
,\boldsymbol{y})=\alpha(x_{l},y_{l})$ if $\boldsymbol{x}$ and $\boldsymbol{y}$
differ in exactly one index $l\in\{1,\ldots,N\}$, and $\boldsymbol{A}%
_{N}(\boldsymbol{x},\boldsymbol{y})=0$ otherwise. Then $\boldsymbol{A}_{N}$
determines which states of the $N$-particle system can be reached in one jump.
Observe that $\boldsymbol{A}_{N}$ is symmetric and irreducible with values in
$\{0,1\}$. There are many ways one can define a rate matrix $\boldsymbol{\Psi
}^{N}$ such that the corresponding Markov process is reversible with respect
to $\boldsymbol{\pi}_{N}$. Three standard ones are as follows. Let $a^{+}%
=\max\{a,0\}$. For $\boldsymbol{x},\boldsymbol{y}\in\mathcal{X}^{N}$,
$\boldsymbol{x}\neq\boldsymbol{y}$, set either
\begin{subequations}
\label{ExPrelimitGenerator}%
\begin{align}
&  & \boldsymbol{\Psi}^{N}(\boldsymbol{x},\boldsymbol{y})  &  \doteq
e^{-\left(  U_{N}(\boldsymbol{y})-U_{N}(\boldsymbol{x})\right)  ^{+}%
}\boldsymbol{A}_{N}(\boldsymbol{x},\boldsymbol{y})\label{ExPrelimitGen1}\\
&  \text{or} & \boldsymbol{\Psi}^{N}(\boldsymbol{x},\boldsymbol{y})  &
\doteq\left(  1+e^{U_{N}(\boldsymbol{y})-U_{N}(\boldsymbol{x})}\right)
^{-1}\boldsymbol{A}_{N}(\boldsymbol{x},\boldsymbol{y})\label{ExPrelimitGen2}\\
&  \text{or} & \boldsymbol{\Psi}^{N}(\boldsymbol{x},\boldsymbol{y})  &
\doteq\frac{1}{2}\left(  1+e^{-\left(  U_{N}(\boldsymbol{y})-U_{N}%
(\boldsymbol{x})\right)  }\right)  \boldsymbol{A}_{N}(\boldsymbol{x}%
,\boldsymbol{y}). \label{ExPrelimitGen3}%
\end{align}
In all three cases set $\boldsymbol{\Psi}^{N}(\boldsymbol{x},\boldsymbol{x}%
)\doteq-\sum_{\boldsymbol{y}:\boldsymbol{y}\neq\boldsymbol{x}}\boldsymbol{\Psi}%
^{N}(\boldsymbol{x},\boldsymbol{y})$, $\boldsymbol{x}\in\mathcal{X}^{N}$. The
model defined by (\ref{ExPrelimitGen1}) is sometimes referred to as
\emph{Metropolis dynamics}, and (\ref{ExPrelimitGen2}) as \emph{heat bath
dynamics} \cite{martinelli99}. The matrix $\boldsymbol{\Psi}^{N}$ is the
generator of an irreducible continuous-time finite-state Markov process with
state space $\mathcal{X}^{N}$. In what follows we will consider only
(\ref{ExPrelimitGen1}), the analysis for the other dynamics being completely analogous.

Define $H:\mathcal{X}\times\mathbb{R}^{d}\rightarrow\mathbb{R}$ by
\end{subequations}
\begin{equation}
H(x,p)=H^{x}(p)\doteq K^{x}(p)+\sum_{z\in\mathcal{X}}\left(  \frac{\partial
}{\partial p_{x}}K^{z}(p)\right)  p_{z} \label{eq:dofofH}%
\end{equation}
and $\Psi:\mathcal{X}\times\mathcal{X}\times\mathbb{R}^{d}\rightarrow
\mathbb{R}$ by%
\[
\Psi(x,y,p)\doteq H^{y}(p)-H^{x}(p),\;(x,y,p)\in\mathcal{X}\times
\mathcal{X}\times\mathbb{R}^{d}.
\]

The following lemma shows that each $\boldsymbol{\Psi}^{N}$ in
\eqref{ExPrelimitGenerator} is the infinitesimal generator of a family of
weakly interacting Markov processes in the sense of Section~\ref{subs-model}.
For example, with the dynamics (\ref{ExPrelimitGen1}) it will follow from
Lemma \ref{lem:isweakinter} that $\Gamma_{x,y}^{N}(r)\rightarrow
e^{-(\Psi(x,y,r))^{+}}\alpha(x,y)$ as $N\rightarrow\infty$.

\begin{lemma}
\label{lem:isweakinter} There exists $C<\infty$ and for each $N\in\mathbb{N}$
a function $B^{N}:\mathcal{X}\times\mathcal{X}\times\mathcal{P}(\mathcal{X}%
)\rightarrow\mathbb{R}$ satisfying
\begin{equation}
\sup_{(x,y,p)\in\mathcal{X}\times\mathcal{X}\times\mathcal{P}(\mathcal{X}%
)}|B^{N}(x,y,p)|\leq\frac{C}{N} \label{eq:equnifbd}%
\end{equation}
such that the following holds. Let $\boldsymbol{x},\boldsymbol{y}%
\in\mathcal{X}^{N}$ be such that $\boldsymbol{A}_{N}(\boldsymbol{x}%
,\boldsymbol{y})=1$, and let $l\in\{1,\ldots,N\}$ be the unique index such
that $x_{l}\neq y_{l}$. Then%
\[
U_{N}(\boldsymbol{y})-U_{N}(\boldsymbol{x})=\Psi(x_{l},y_{l},r^{N}%
(\boldsymbol{x}))+B^{N}(x_{l},y_{l},r^{N}(\boldsymbol{x})).
\]

\end{lemma}

\begin{proof}
Using the definition of $U_{N}$ we have
\begin{align}
U_{N}(\boldsymbol{y})-U_{N}(\boldsymbol{x})  &  =\sum_{i=1}^{N}K^{y_{i}}%
(r^{N}(\boldsymbol{y}))-\sum_{i=1}^{N}K^{x_{i}}(r^{N}(\boldsymbol{x}%
))\nonumber\\
&  =\sum_{i=1,i\neq l}^{N}\left(  K^{y_{i}}\left(  r^{N}(\boldsymbol{x}%
)+\frac{1}{N}(e_{y_{l}}-e_{x_{l}})\right)  -K^{x_{i}}(r^{N}(\boldsymbol{x}%
))\right) \nonumber\\
&  \quad+K^{y_{l}}\left(  r^{N}(\boldsymbol{x})+\frac{1}{N}(e_{y_{l}}%
-e_{x_{l}})\right)  -K^{x_{l}}(r^{N}(\boldsymbol{x})). \label{eq:taylor}%
\end{align}
Let $\Vert p\Vert\doteq\sum_{x}|p_{x}|$ for $p\in\mathbb{R}^{d}$. From the
$C^{2}$ property of $K$ it follows that there are $A:\mathcal{X}%
\times\mathbb{R}^{d}\times\mathbb{R}^{d}\rightarrow\mathbb{R}$ and $c_{1}%
\in(0,\infty)$ such that for all $p,q\in\mathbb{R}^{d}$, $y\in\mathcal{X}$,
\[
K^{y}(q)-K^{y}(p)=\nabla_{p}K^{y}(p)\cdot(q-p)+A(y,p,q),
\]
and
\begin{equation}
\sup_{y\in\mathcal{X},\Vert p\Vert\leq2,\Vert q\Vert\leq2}|A(y,p,q)|\leq
c_{1}\Vert p-q\Vert^{2}. \label{eq:bound1}%
\end{equation}
Also note that for some $c_{2}\in(0,\infty)$
\begin{equation}
\sup_{y\in\mathcal{X},\Vert p\Vert\leq2,\Vert q\Vert\leq2}|K^{y}%
(q)-K^{y}(p)|\leq c_{2}\Vert p-q\Vert, \label{eq:bound2}%
\end{equation}
and since $r_{z}^{N}(\boldsymbol{x})$ is the empirical measure $\frac{1}%
{N}\sum_{i=1}^{N}1_{\left\{  x_{i}=z\right\}  }$,%
\[
\sum_{z\in\mathcal{X}}\left(  \frac{\partial}{\partial p_{y_{l}}}K^{z}%
(r^{N}(\boldsymbol{x}))\right)  r_{z}^{N}(\boldsymbol{x})=\frac{1}{N}%
\sum_{i=1}^{N}\left(  \frac{\partial}{\partial p_{y_{l}}}K^{x_{i}}%
(r^{N}(\boldsymbol{x}))\right)  .
\]
Using the various definitions and in particular (\ref{eq:dofofH}) and
(\ref{eq:taylor}) we have
\[
U_{N}(\boldsymbol{y})-U_{N}(\boldsymbol{x})-\Psi(x_{l},y_{l},r^{N}%
(\boldsymbol{x}))=B^{N}(x_{l},y_{l},r^{N}(\boldsymbol{x})),
\]
where for $(x,y,p)\in\mathcal{X}\times\mathcal{X}\times\mathcal{S}$
\begin{align*}
B^{N}(x,y,p)  &  =N\sum_{z\in\mathcal{X}}A\left(  z,p,p+\frac{1}{N}%
(e_{y}-e_{x})\right)  p_{z}-A\left(  x,p,p+\frac{1}{N}(e_{y}-e_{x})\right) \\
&  \quad-\frac{1}{N}\nabla_{p}K^{x}(p)\cdot(e_{y}-e_{x})-K^{y}(p)+K^{y}\left(
p+\frac{1}{N}(e_{y}-e_{x})p\right)  .
\end{align*}
Using the bounds (\ref{eq:bound1}) and (\ref{eq:bound2}), we have that
\eqref{eq:equnifbd} is satisfied for a suitable $C<\infty$.
\end{proof}

\medskip From Lemma \ref{lem:isweakinter} we have that the jump rates of the
Markov process governed by $\boldsymbol{\Psi}^{N}$ in each of the three cases
in \eqref{ExPrelimitGenerator} depend on the components $x_{j}$, $j\neq l$,
only through the empirical measure $r^{N}(\boldsymbol{x})$. For example, with
$\boldsymbol{\Psi}^{N}$ as in \eqref{ExPrelimitGen1}, for $\boldsymbol{x}%
,\boldsymbol{y}\in\mathcal{X}^{N}$ such that $x_{l}\neq y_{l}$ for some
$l\in\{1,\ldots,N\}$, $x_{j}=y_{j}$ for $j\neq l$,
\[
\boldsymbol{\Psi}^{N}(\boldsymbol{x},\boldsymbol{y})\doteq e^{-\left(
\Psi(x_{l},y_{l},r^{N}(\boldsymbol{x}))+B^{N}(x_{l},y_{l},r^{N}(\boldsymbol{x}%
))\right)  ^{+}}\boldsymbol{A}_{N}(\boldsymbol{x},\boldsymbol{y}).
\]
Thus $\boldsymbol{\Psi}^{N}$ as in \eqref{ExPrelimitGenerator} is the
generator of a family of weakly interacting Markov processes in the sense of
Section~\ref{SectInterMarkov}. Indeed for \eqref{ExPrelimitGen1}, in the
notation of that section, $\boldsymbol{\Psi}^{N}$ is defined in terms of the
family of matrices $\{\Gamma^{N}(r)\}_{r\in\mathcal{P}(\mathcal{X})}$, where
for $x,y\in\mathcal{X}$, $x\neq y$,
\begin{equation}
\Gamma_{x,y}^{N}(r)=e^{-\left(  \Psi(x,y,r)+B^{N}(x,y,r))\right)  ^{+}}%
\alpha(x,y). \label{eq:eqprelim}%
\end{equation}

The rate matrix $\boldsymbol{\Psi}^{N}$ in \eqref{ExPrelimitGenerator} has
$\boldsymbol{\pi}_{N}$ defined in \eqref{eq:gibbsmzr} as its stationary
distribution. To see this, let $\boldsymbol{x},\boldsymbol{y}\in
\mathcal{X}^{N}$. By symmetry, $\boldsymbol{A}_{N}(\boldsymbol{x}%
,\boldsymbol{y})=\boldsymbol{A}_{N}(\boldsymbol{y},\boldsymbol{x})$. Taking
into account \eqref{eq:gibbsmzr}, it is easy to see that for any of the three
choices of $\boldsymbol{\Psi}^{N}$ according to \eqref{ExPrelimitGenerator} we
have $\boldsymbol{\pi}_{N}(\boldsymbol{x})\boldsymbol{\Psi}^{N}(\boldsymbol{x}%
,\boldsymbol{y})=\boldsymbol{\pi}_{N}(\boldsymbol{y})\boldsymbol{\Psi}%
^{N}(\boldsymbol{y},\boldsymbol{x})$. Thus $\boldsymbol{\Psi}^{N}$ satisfies
the detailed balance condition with respect to $\boldsymbol{\pi}_{N}$, and
since $\boldsymbol{\Psi}^{N}$ is irreducible, $\boldsymbol{\pi}_{N}$ is its
unique stationary distribution.

Hence by \eqref{eq:equnifbd}, the family $\{\Gamma^{N}(r)\}_{r\in
\mathcal{P}(\mathcal{X})}$ defined by \eqref{eq:eqprelim} satisfies Condition
\ref{ass-llnbasic} with
\begin{equation}
\Gamma_{x,y}(r)=e^{-\left(  \Psi(x,y,r)\right)  ^{+}}\alpha(x,y),\;x\neq
y,\;r\in\mathcal{P}(\mathcal{X}). \label{eq:eqlimgen}%
\end{equation}
With $\boldsymbol{X}^{N}$ and $\mu^{N}$ associated with $\Gamma^{N}(\cdot)$ as
in Section \ref{subs-model}, Theorem \ref{ThLLN} implies the sequence
$\{\mu^{N}\}_{N\in\mathbb{N}}$ of $D([0,\infty),\mathcal{P}(\mathcal{X}%
))$-valued random variables satisfies a law of large numbers with limit
determined by \eqref{EqLimitKolmogorov}, and with $\Gamma(\cdot)$ as in
\eqref{eq:eqlimgen}. More precisely, if $\mu^{N}(0)$ converges in distribution
to $q\in\mathcal{P}(\mathcal{X})$ as $N$ goes to infinity then $\mu^{N}%
(\cdot)$ converges in distribution to the solution $p(\cdot)$ of
\eqref{eq:eqlimgen} with $p(0)=q$. Thus $\Gamma(\cdot)$ describes the limit
model for the families of weakly interacting Markov processes of Gibbs type
introduced above. If $p\in\mathcal{P}(\mathcal{X})$ is fixed then $\Gamma(p)$
is the generator of an ergodic finite-state Markov process, and the unique
invariant distribution on $\mathcal{X}$ is given by $\pi(p)$ with
\begin{equation}
\pi(p)_{x}\doteq\frac{1}{Z(p)}\exp\left(  -H^{x}(p)\right)  ,
\label{ExLimitStationary}%
\end{equation}
where%
\[
Z(p)\doteq\sum_{x\in\mathcal{X}}\exp\left(  -H^{x}(p)\right)  .
\]

\subsection{Limit of relative entropies}

\label{SectGibbsRELimit} We will now evaluate the limit in
\eqref{eq:firstlyap} for the family of interacting $N$-particle systems
introduced in Section \ref{SectGibbsSystems}. As noted earlier, the paper
\cite{BDFR-Examples} will study the Lyapunov function properties of the limit.

\begin{theorem}
\label{ThRELimit} For $N\in\mathbb{N}$, define $\tilde{F}_{N}\!:\mathcal{P}%
(\mathcal{X})\rightarrow\lbrack0,\infty]$ by
\begin{equation}
\tilde{F}_{N}(p)\doteq\frac{1}{N}R\left(  \left.  \otimes^{N}p\right\Vert
\boldsymbol{\pi}_{N}\right)  . \label{ExPrelimitREFnct}%
\end{equation}
Then there is a constant $C\in\mathbb{R}$ such that for all $p\in
\mathcal{P}(\mathcal{X})$,
\begin{equation}
\lim_{N\rightarrow\infty}\tilde{F}_{N}(p)=\sum_{x\in\mathcal{X}}(K^{x}(p)+\log
p_{x})p_{x}-C. \label{eq:eqlyapfn}%
\end{equation}

\end{theorem}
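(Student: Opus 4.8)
The plan is to expand the relative entropy into three explicit terms using the product structure of $\otimes^{N}p$ and the Gibbs form of $\boldsymbol{\pi}_{N}$, and then pass to the limit term by term. Writing $\boldsymbol{X}=(X_{1},\dots,X_{N})$ for i.i.d.\ samples from $p$ and using $\log\boldsymbol{\pi}_{N}(\boldsymbol{x})=-\log Z_{N}-U_{N}(\boldsymbol{x})$, a direct computation gives
\[
R(\otimes^{N}p\Vert\boldsymbol{\pi}_{N})=\E\Big[\sum_{i=1}^{N}\log p_{X_{i}}\Big]+\log Z_{N}+\E[U_{N}(\boldsymbol{X})],
\]
since the $\log Z_{N}$ term is constant and pulls out against the total mass $1$. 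Dividing by $N$ and using $\E[\sum_{i}\log p_{X_{i}}]=N\sum_{x}p_{x}\log p_{x}$ yields
\[
\tilde{F}_{N}(p)=\sum_{x\in\mathcal{X}}p_{x}\log p_{x}\;+\;\tfrac{1}{N}\log Z_{N}\;+\;\tfrac{1}{N}\E[U_{N}(\boldsymbol{X})],
\]
in which the first term is already exact and independent of $N$.

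For the energy term, regrouping the sum $U_{N}(\boldsymbol{X})=\sum_{i}K^{X_{i}}(r^{N}(\boldsymbol{X}))$ by state gives $\tfrac{1}{N}U_{N}(\boldsymbol{X})=\sum_{x}r_{x}^{N}(\boldsymbol{X})K^{x}(r^{N}(\boldsymbol{X}))$. As the $X_{i}$ are i.i.d.\ with law $p$, the empirical measure $r^{N}(\boldsymbol{X})$ converges to $p$ almost surely, and since $K$ is continuous on the compact simplex $\mathcal{S}$ the integrand is uniformly bounded; bounded convergence then gives $\tfrac{1}{N}\E[U_{N}(\boldsymbol{X})]\to\sum_{x}p_{x}K^{x}(p)$. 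The remaining and principal work is the partition-function term. Grouping configurations by their empirical measure and noting $U_{N}(\boldsymbol{x})=N\sum_{x}r_{x}K^{x}(r)$ depends on $\boldsymbol{x}$ only through $r=r^{N}(\boldsymbol{x})$, I would write
\[
Z_{N}=\sum_{r\in\mathcal{S}_{N}}\binom{N}{Nr_{1},\dots,Nr_{d}}\exp\Big(-N\sum_{x}r_{x}K^{x}(r)\Big).
\]
Stirling's formula gives $\tfrac{1}{N}\log\binom{N}{Nr_{1},\dots,Nr_{d}}=-\sum_{x}r_{x}\log r_{x}+O(\tfrac{\log N}{N})$, so with $\Phi(r)\doteq-\sum_{x}r_{x}\log r_{x}-\sum_{x}r_{x}K^{x}(r)$, a Laplace-type estimate (upper bound from the polynomial cardinality $|\mathcal{S}_{N}|\le(N+1)^{d}$, matching lower bound by selecting $r_{N}\in\mathcal{S}_{N}$ within $O(1/N)$ of a maximizer of the continuous function $\Phi$) yields $\tfrac{1}{N}\log Z_{N}\to\sup_{r\in\mathcal{S}}\Phi(r)$. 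Setting $C\doteq-\sup_{r\in\mathcal{S}}\Phi(r)=\inf_{r\in\mathcal{S}}\sum_{x}(K^{x}(r)+\log r_{x})r_{x}$, which is manifestly independent of $p$, and combining the three limits produces exactly \eqref{eq:eqlyapfn}.

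The main obstacle is the Laplace asymptotics for $\tfrac{1}{N}\log Z_{N}$: one must make Stirling's approximation uniform over the simplex, in particular on boundary faces where some $r_{x}=0$ and the naive expansion fails. There the relevant factorials are $O(1)$ and contribute only $O(\log N)$ to the logarithm, hence vanish after dividing by $N$, so the limit is unaffected; continuity of $r\mapsto\sum_{x}r_{x}\log r_{x}$ on $\mathcal{S}$ under the convention $0\log 0=0$ guarantees $\Phi$ attains its supremum on the compact set $\mathcal{S}$, validating the lower bound. Everything else is a routine computation.
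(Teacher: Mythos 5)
Your proposal is correct, and it follows the paper's proof exactly in its overall architecture: the same three-term decomposition of $\tilde{F}_{N}(p)$ into the entropy term (which is exact and $N$-independent), the energy term, and $\frac{1}{N}\log Z_{N}$, with the energy term handled as in the paper by the strong law of large numbers for the empirical measure, continuity of $K$, and bounded convergence. The one genuinely different step is the partition-function asymptotics. The paper writes $Z_{N}=|\mathcal{X}|^{N}E[\exp(-N\Phi(r^{N}(Y_{1},\ldots,Y_{N})))]$ for i.i.d.\ uniform $Y_{i}$ and invokes Sanov's theorem together with Varadhan's lemma to get $\frac{1}{N}\log Z_{N}\to-\inf_{q}\{R(q\Vert\nu)+\Phi(q)\}+\log|\mathcal{X}|$, whereas you group configurations into type classes and run a direct Laplace argument with the two-sided multinomial bound $(N+1)^{-d}e^{NH(r)}\leq\binom{N}{Nr_{1},\ldots,Nr_{d}}\leq e^{NH(r)}$ (valid on all of $\mathcal{S}_{N}$, boundary included, which disposes of your Stirling worry more cleanly than an $O(\log N/N)$ expansion) and the polynomial bound $|\mathcal{S}_{N}|\leq(N+1)^{d}$. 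Since $R(q\Vert\nu)=\sum_{x}q_{x}\log q_{x}+\log|\mathcal{X}|$, your constant $C=\inf_{r\in\mathcal{S}}\sum_{x}(K^{x}(r)+\log r_{x})r_{x}$ agrees with the paper's. In effect you have unrolled, by the method of types, the proof of the finite-alphabet Sanov--Varadhan evaluation that the paper cites as a black box: your route is self-contained and elementary and makes the value of $C$ explicit as the infimum of the limiting functional (so the limit in \eqref{eq:eqlyapfn} is manifestly nonnegative), while the paper's route is shorter given the references and is stylistically consistent with the large-deviation framework used in Section \ref{sec:asymgenexc}, where the very same Dembo--Zeitouni counting bound reappears in the proof of Theorem \ref{thm:limre}.
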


\begin{proof}
Let $p\in\mathcal{P}(\mathcal{X})$. By the definition of relative entropy in
(\ref{eq:relent}), (\ref{ExPrelimitREFnct}), (\ref{eq:gibbsmzr}) and
(\ref{eq:norm}),%
\begin{align*}
\tilde{F}_{N}(p)  &  =\frac{1}{N}\sum_{\boldsymbol{x}\in\mathcal{X}^{N}%
}\left(  \prod_{i=1}^{N}p_{x_{i}}\right)  \log\left(  \frac{\prod_{i=1}%
^{N}p_{x_{i}}}{\boldsymbol{\pi}_{N}(\boldsymbol{x})}\right) \\
&  =\frac{1}{N}\sum_{\boldsymbol{x}\in\mathcal{X}^{N}}\left(  \prod_{i=1}%
^{N}p_{x_{i}}\right)  \left(  \sum_{i=1}^{N}\log p_{x_{i}}\right)  +\frac
{1}{N}\log Z_{N}\\
&  \mbox{}\quad+\frac{1}{N}\sum_{\boldsymbol{x}\in\mathcal{X}^{N}}\left(
\prod_{i=1}^{N}p_{x_{i}}\right)  \left(  \sum_{i=1}^{N}K(x_{i},r^{N}%
(\boldsymbol{x}))\right)  .
\end{align*}
Let $\{X_{i}\}_{i\in\mathbb{N}}$ be a sequence of i.i.d.\ $\mathcal{X}$-valued
random variables with common distribution $p$ defined on some probability
space. Then%
\begin{equation}
\frac{1}{N}\sum_{\boldsymbol{x}\in\mathcal{X}^{N}}\left(  \prod_{i=1}%
^{N}p_{x_{i}}\right)  \left(  \sum_{i=1}^{N}\log p_{x_{i}}\right)  =E\left[
\frac{1}{N}\sum_{i=1}^{N}\log  p_{X_{i}}  \right]  =E\left[
\log  p_{X_{1}}  \right]  , \label{eq:le}%
\end{equation}
and%
\begin{align*}
\frac{1}{N}\sum_{\boldsymbol{x}\in\mathcal{X}^{N}}\left(  \prod_{i=1}%
^{N}p_{x_{i}}\right)  \sum_{j=1}^{N}K(x_{j},r^{N}(\boldsymbol{x}))  &
=E\left[  \frac{1}{N}\sum_{j=1}^{N}K(X_{j},r^{N}(X_{1},\ldots,X_{N}))\right]
\\
&  =E\left[  K(X_{1},r^{N}(X_{1},\ldots,X_{N}))\right]  ,
\end{align*}
which converges to $E\left[  K(X_{1},p)\right]  $ as $N\rightarrow\infty$ due
to the strong law of large numbers and continuity of $K$.

In order to compute the limit of $\frac{1}{N}\log Z_{N}$, define a bounded
and continuous mapping $\Phi\!:\mathcal{P}(\mathcal{X})\rightarrow\mathbb{R}$
by
\[
\Phi(q)\doteq\sum_{x\in\mathcal{X}}K(x,q)q_{x}.
\]
Let $\{Y_{i}\}_{i\in\mathbb{N}}$ be i.i.d. $\mathcal{X}$-valued random
variables with common distribution $\nu$ given by $\nu_{x}\doteq\frac
{1}{|\mathcal{X}|}$, $x\in\mathcal{X}$. Then again using that $r^{N}%
(\boldsymbol{x})$ is the empirical measure of $\boldsymbol{x}$,
\begin{align*}
Z_{N}  &  =\sum_{\boldsymbol{x}\in\mathcal{X}^{N}}\exp\left(  -\sum_{i=1}%
^{N}K(x_{i},r^{N}(\boldsymbol{x}))\right) \\
&  =|\mathcal{X}|^{N}E\left[  \exp\left(  -\sum_{i=1}^{N}K(Y_{i},r^{N}%
(Y_{1},\ldots,Y_{N}))\right)  \right] \\
&  =|\mathcal{X}|^{N}E\left[  \exp\left(  -N\Phi(r^{N}(Y_{1},\ldots
,Y_{N}))\right)  \right]  .
\end{align*}
Thus by Sanov's theorem and Varadhan's theorem on the asymptotic evaluation of
exponential integrals \cite{DupEllBook}, it follows that
\[
\lim_{N\rightarrow\infty}\frac{1}{N}\log Z_{N}=-\inf_{q\in\mathcal{P}%
(\mathcal{X})}\left\{  R(q\Vert\nu)+\Phi(q)\right\}  +\log|\mathcal{X}%
|\doteq-C.
\]
Note that $C$ is finite and does not depend on $p$.

Recalling that $X_{1}$ is a random variable with distribution $p$, we have on
combining these observations that%
\begin{align*}
\lim_{N\rightarrow\infty}\tilde{F}_{N}(p)  &  =E\left[  \log  p_{X_{1}%
}  \right]  +E\left[  K(X_{1},p)\right]  -C\\
&  =\sum_{x\in\mathcal{X}}p_{x}\log p_{x}+\sum_{x\in\mathcal{X}}%
K(x,p)p_{x}-C.
\end{align*}
This proves \eqref{eq:eqlyapfn} and completes the proof.
\end{proof}

\medskip As an immediate consequence we get the following result for $K$ as in \eqref{eq:affinegibbs}.

\begin{corollary}
\label{cor:lyapforgibbs} Suppose that $K$ is defined by \eqref{eq:affinegibbs}
and let $\tilde{F}_{N}$ be as in \eqref{ExPrelimitREFnct}. Then
\begin{equation}
\lim_{N\rightarrow\infty}\tilde{F}_{N}(p)=\sum_{x\in\mathcal{X}}\left(
V(x)+\sum_{y\in\mathcal{X}}W(x,y)p_{y}+\log p_{x}\right)  p_{x}-C.
\label{eq:eqlyapfnG}%
\end{equation}

\end{corollary}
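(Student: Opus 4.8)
The plan is to read this off directly from Theorem~\ref{ThRELimit}, since the affine specification \eqref{eq:affinegibbs} is just a particular choice of the general kernel $K$ treated there. First I would check that $K$ as in \eqref{eq:affinegibbs} satisfies the standing hypothesis of Theorem~\ref{ThRELimit}, namely that $K(x,\cdot)$ is twice continuously differentiable for each $x\in\mathcal{X}$. This is immediate: for fixed $x$ the map $p\mapsto V(x)+\beta\sum_{y\in\mathcal{X}}W(x,y)p_{y}$ is affine, hence $C^{\infty}$ with all second derivatives vanishing, so the theorem applies verbatim.

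With the hypothesis in force, Theorem~\ref{ThRELimit} supplies a constant $C\in\mathbb{R}$, independent of $p$, with
\[
\lim_{N\rightarrow\infty}\tilde{F}_{N}(p)=\sum_{x\in\mathcal{X}}\left(K^{x}(p)+\log p_{x}\right)p_{x}-C .
\]
It then remains only to insert the explicit form of $K^{x}$. Writing $K^{x}(p)=V(x)+\beta\sum_{y\in\mathcal{X}}W(x,y)p_{y}$ and substituting into the bracket yields $\sum_{x\in\mathcal{X}}\bigl(V(x)+\beta\sum_{y\in\mathcal{X}}W(x,y)p_{y}+\log p_{x}\bigr)p_{x}-C$, which is exactly the right-hand side of \eqref{eq:eqlyapfnG} (with the interaction parameter $\beta$ multiplying the $W$-term).

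There is no real obstacle here, as the content of the corollary is a specialization of notation rather than a new argument. The one point worth recording is that the constant $C$ is precisely the one produced by Theorem~\ref{ThRELimit} for this $K$, namely $C=\inf_{q\in\mathcal{P}(\mathcal{X})}\{R(q\Vert\nu)+\Phi(q)\}-\log|\mathcal{X}|$ with $\Phi(q)=\sum_{x}K(x,q)q_{x}$. For the affine choice this $\Phi$ becomes the explicit quadratic $\Phi(q)=\sum_{x}V(x)q_{x}+\beta\sum_{x,y}W(x,y)q_{x}q_{y}$, so $C$ could in principle be written out, although the statement only needs its existence and independence of $p$.
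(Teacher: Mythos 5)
Your proposal is correct and takes exactly the paper's route: the paper gives no separate proof, stating the corollary as an immediate consequence of Theorem \ref{ThRELimit}, which is precisely your specialization (the affine $K$ of \eqref{eq:affinegibbs} is trivially $C^{2}$, so \eqref{eq:eqlyapfn} applies and one substitutes $K^{x}(p)=V(x)+\beta\sum_{y}W(x,y)p_{y}$). Your parenthetical flag about $\beta$ is also well taken: the substitution produces $\beta\sum_{y}W(x,y)p_{y}$, so the missing $\beta$ in the displayed \eqref{eq:eqlyapfnG} is evidently a typo or an absorption of $\beta$ into $W$, as confirmed by \eqref{gibbslyapfn} later in the paper, where the factor $\beta$ appears explicitly.
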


\begin{remark}
\label{rem:notrev} \emph{ In Section 4 of \cite{BDFR-Examples} it will be
shown that the function $F(p)$ defined by the right side of
\eqref{eq:eqlyapfn} satisfies a descent property: $\frac{d}{dt}F(p(t))\leq0$,
where $p(\cdot)$ is the solution of \eqref{EqLimitKolmogorov} with $\Gamma$ as
in \eqref{eq:eqlimgen}. Furthermore $\frac{d}{dt}F(p(t))=0$ if and only if
$p(t)$ is a fixed point of \eqref{EqLimitKolmogorov}, i.e., $p(t)=\pi(p(t))$.
One may conjecture that an analogous descent property holds for the function
$\widehat{F}$ obtained by taking limits of relative entropies computed in the
reverse order, namely for the function}
\begin{equation}
\widehat{F}(p)\doteq\lim_{N\rightarrow\infty}\frac{1}{N}R\left(
\boldsymbol{\pi}_{N}\Vert\otimes^{N}p\right)  ,\;p\in\mathcal{P}(\mathcal{X}).
\label{eq-widehatF}%
\end{equation}
\emph{However, in general, this is \emph{not true}, as the following example
illustrates. }

\emph{Consider the setting where $K$ is given by \eqref{eq:affinegibbs} with
environment potential $V\equiv0$, $\beta=1,$ and non-constant symmetric
interaction potential $W$ with $W\geq0$ and $W(x,x)=0$ for all $x\in
\mathcal{X}$. Then, by \eqref{ExLimitStationary}, the invariant distributions
are given by}
\[
\pi(p)_{x}=\frac{1}{Z(p)}\exp\left(  -2\sum_{y\in\mathcal{X}}W(x,y)p_{y}%
\right)  ,
\]
\emph{and the family of rate matrices $(\Gamma(p))_{p\in\mathcal{P}%
(\mathcal{X})}$ are of the form \eqref{eq:eqlimgen}, with $\Psi(x,y,p)\doteq
2\sum_{z\in\mathcal{X}}\left(  W(y,z)-W(x,z)\right)  p_{z}$. } \emph{Suppose
$W$ is such that there exists a unique solution $\pi^{\ast}\in\mathcal{P}%
(\mathcal{X})$ to the fixed point equation $\pi(p)=p$. Then using the same
type of calculations as those used to prove Theorem \ref{ThRELimit}, one can
check that $\widehat{F}$ is well defined and takes the form }
\[
\widehat{F}(p)=R\left(  \pi^{\ast}\Vert p\right)  +C,\;p\in\mathcal{P}%
(\mathcal{X})
\]
\emph{for some finite constant $C\in\mathbb{R}$ that depends on $\pi^{\ast}$
(but not on $p$). Thus the proposed Lyapunov function is relative entropy with
the independent variable in the second position, and the dynamics are of the
form (\ref{EqLimitKolmogorov}) for }$\Gamma$ \emph{that is not a constant.
While }$R\left(  \pi^{\ast}\Vert p\right)  $ \emph{satisfies the descent
property for constant ergodic matrices }$\Gamma$ \emph{such that} $\pi^{\ast
}\Gamma=0$\emph{, this property is not valid in any generality when }$\Gamma
$\emph{ depends on} $p$\emph{, and one can then easily construct examples for
which the function $\widehat{F}$ defined above does not enjoy the descent
property. }
\end{remark}

\section{General Weakly Interacting Systems}

\label{sec:genweak} The analysis of Section \ref{sec:gibbstype} crucially
relied on the fact that the stationary distributions for systems of Gibbs type
take an explicit form. In general, when the form of $\boldsymbol{\pi}_{N}$ is
not known, evaluation of the limit in \eqref{eq:firstlyap} becomes infeasible.
A natural approach then is to consider the function in \eqref{eq:fnq} and to
evaluate the quantity $\lim_{t\rightarrow\infty}\lim_{N\rightarrow\infty}%
F_{t}^{N}(q)$. In this section we will consider the problem of evaluating the
inner limit, i.e. $\lim_{N\rightarrow\infty}F_{t}^{N}(q)$. We will show that
this limit, denoted by $J_{t}(q)$, exists quite generally. In
\cite{BDFR-Examples} we will study properties of the candidate Lyapunov
function $\lim_{t\rightarrow\infty}J_{t}(q)$.

To argue the existence of $\lim_{N\rightarrow\infty}F_{t}^{N}(q)$ and to
identify the limit we begin with a general result.

\subsection{Relative entropy asymptotics for an exchangeable collection}

\label{sec:asymgenexc}

Let $\boldsymbol{Q}^{N}$ be an exchangeable probability measure on
$\mathcal{X}^{N}$. We next present a result that shows how to evaluate the
limit of
\[
\frac{1}{N}R(\otimes^{N}q\Vert\boldsymbol{Q}^{N})
\]
as $N\rightarrow\infty$, where $q\in\mathcal{S}$. Recall that $r^{N}%
:{\mathcal{X}}^{N}\rightarrow{\mathcal{P}}_{N}({\mathcal{X}})$ defined in
(\ref{def-rn}) returns the empirical measure of a sequence in ${\mathcal{X}%
}^{N}$.

\begin{definition}
Let $J:\mathcal{S}\rightarrow\lbrack0,\infty]$ be a lower semicontinuous
function. We say that $r^{N}$ under the probability law $\boldsymbol{Q}^{N}$
satisfies a locally uniform LDP on ${\mathcal{P}}({\mathcal{X}})$ with rate
function $J$ if, given any sequence $\{q_{N}\}_{N\in\mathbb{N}}$, $q_{N}%
\in\mathcal{P}_{N}(\mathcal{X})$, such that $q_{N}\rightarrow q\in
\mathcal{P}(\mathcal{X})$,
\[
\lim_{N\rightarrow\infty}\frac{1}{N}\log\boldsymbol{Q}^{N}\left(  \left\{
\boldsymbol{y}\in{\mathcal{X}}^{N}:r^{N}(\boldsymbol{y})=q^{N}\right\}
\right)  =-J(q).
\]

\end{definition}

The standard formulation of a LDP is stated in terms of bounds for open and
closed sets. In contrast, a locally uniform LDP (which implies the standard
LDP with the same rate function) provides approximations to the probability
that a random variable equals a single point. Under an appropriate
communication condition, such a strengthening is not surprising when random
variables take values in a lattice.

The following is the main result of this section. Together with a large
deviation result 
stated in Theorem \ref{thm:ldpthm}
below, it will be used to characterize $\lim_{N\rightarrow\infty}F_{t}^{N}(q)$.

\begin{theorem}
\label{thm:limre}Suppose that $r^{N}$ under the exchangeable probability law
$\boldsymbol{Q}^{N}$ satisfies a locally uniform LDP on ${\mathcal{P}%
}({\mathcal{X}})$ with rate function $J$. Suppose that $J(q)<\infty$ for all
$q\in{\mathcal{P}}({\mathcal{X}})$. Then for all $q\in{\mathcal{P}%
}({\mathcal{X}})$,
\[
\lim_{N\rightarrow\infty}\frac{1}{N}R\left(  \otimes^{N}q\left\Vert
\boldsymbol{Q}^{N}\right.  \right)  =J(q).
\]

\end{theorem}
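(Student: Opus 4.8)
The plan is to write out $\frac{1}{N}R(\otimes^N q \Vert \boldsymbol{Q}^N)$ explicitly, organize the sum by the value of the empirical measure, and then extract the exponential rate using the locally uniform LDP. Writing $\otimes^N q(\boldsymbol{x}) = \prod_{i=1}^N q_{x_i}$, and noting that this product depends on $\boldsymbol{x}$ only through the empirical measure $r^N(\boldsymbol{x})$, I would first observe that $\prod_{i=1}^N q_{x_i} = \exp\!\left(N \sum_{x\in\mathcal{X}} r^N_x(\boldsymbol{x}) \log q_x\right)$. The relative entropy then splits as
\[
\frac{1}{N}R(\otimes^N q \Vert \boldsymbol{Q}^N) = \frac{1}{N}\sum_{\boldsymbol{x}\in\mathcal{X}^N} \otimes^N q(\boldsymbol{x})\left[\sum_{x}\log q_x^{N r^N_x(\boldsymbol{x})} - \log \boldsymbol{Q}^N(\boldsymbol{x})\right].
\]
Since $\boldsymbol{Q}^N$ is exchangeable, for any $\boldsymbol{x}$ with $r^N(\boldsymbol{x}) = s$ the value $\boldsymbol{Q}^N(\boldsymbol{x})$ is constant over the permutation orbit, so $\boldsymbol{Q}^N(\boldsymbol{x})$ times the orbit size equals $\boldsymbol{Q}^N(\{\boldsymbol{y}: r^N(\boldsymbol{y}) = s\})$. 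This lets me rewrite $\log \boldsymbol{Q}^N(\boldsymbol{x})$ in terms of the point-mass probability that appears in the locally uniform LDP.

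The key computation is to group the full sum by $s \in \mathcal{S}_N$. Under the i.i.d.\ law $\otimes^N q$, the probability that the empirical measure equals $s$ is itself governed by Sanov's theorem at rate $R(s \Vert q)$, with a multinomial combinatorial factor. The plan is to rewrite the whole expression as an expectation under $\otimes^N q$: letting $\{X_i\}$ be i.i.d.\ with law $q$ and $S^N = r^N(X_1,\dots,X_N)$,
\[
\frac{1}{N}R(\otimes^N q \Vert \boldsymbol{Q}^N) = \mathbb{E}\!\left[\sum_{x} S^N_x \log q_x\right] - \frac{1}{N}\mathbb{E}\!\left[\log \boldsymbol{Q}^N(X_1,\dots,X_N)\right].
\]
The first term converges to $\sum_x q_x \log q_x$ by the law of large numbers. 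For the second term, I would use exchangeability to write $\log \boldsymbol{Q}^N(\boldsymbol{x}) = \log \boldsymbol{Q}^N(\{\boldsymbol{y}: r^N(\boldsymbol{y}) = s\}) - \log(\text{orbit size of } s)$, where the orbit size is the multinomial coefficient $\binom{N}{N s_1, \dots, N s_d}$. By the locally uniform LDP, $\frac{1}{N}\log \boldsymbol{Q}^N(\{r^N = S^N\}) \approx -J(S^N) \to -J(q)$, and by Stirling's approximation $\frac{1}{N}\log\binom{N}{N S^N_1,\dots} \to \sum_x q_x \log q_x$ evaluated at the limiting measure, which is $-(\text{entropy of } q)$. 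Combining, the terms $\sum_x q_x \log q_x$ should cancel against the combinatorial contribution, leaving exactly $J(q)$.

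The main obstacle is making the interchange of limit and expectation rigorous, i.e.\ upgrading the pointwise (in $s$) convergence of $\frac{1}{N}\log \boldsymbol{Q}^N(\{r^N = s_N\})$ to $-J(q)$ into convergence of the full expectation $\mathbb{E}[\cdots]$. This requires uniform control: the locally uniform LDP gives the rate only along sequences $s_N \to q$, but the expectation averages over all of $\mathcal{S}_N$, including empirical measures far from $q$ that are exponentially unlikely under $\otimes^N q$ yet could carry a large (possibly badly behaved) value of $-\log \boldsymbol{Q}^N$. To handle this I would split $\mathcal{S}_N$ into a shrinking neighborhood of $q$ and its complement, use the concentration of $S^N$ near $q$ (Sanov upper bound) together with the hypothesis $J < \infty$ everywhere to bound the tail contribution, and use the locally uniform LDP plus Stirling uniformly on the neighborhood. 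The finiteness assumption $J(q) < \infty$ for all $q$ is what prevents the $-\log \boldsymbol{Q}^N$ factor from blowing up and is essential for controlling the tail; getting the two-sided estimate (matching upper and lower bounds) on the expectation is where the real work lies.
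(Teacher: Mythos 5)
Your proposal is correct and follows essentially the same route as the paper's proof: split off the exact entropy term, use exchangeability to write $\boldsymbol{Q}^{N}(\boldsymbol{x})$ as the orbit probability divided by the multinomial orbit size, represent the cross term as an expectation of $\Theta^{N}(r^{N}(X_{1},\dots,X_{N}))$ under i.i.d.\ samples from $q$, and extract the limit from the locally uniform LDP combined with the method-of-types (Stirling) count. The only divergence is at the interchange step, where the paper bypasses your neighborhood/tail splitting: since the locally uniform LDP holds at every point of the compact simplex and $J$ is finite everywhere, a compactness argument yields $\limsup_{N\rightarrow\infty}\sup_{r\in\mathcal{P}_{N}(\mathcal{X})}|\Theta^{N}(r)|<\infty$, after which the strong law of large numbers and the bounded convergence theorem finish the proof with no Sanov tail estimate needed.
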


\begin{proof}
We follow the convention that $x\log x$ equals $0$ when $x=0$. Fix
$q\in{\mathcal{P}}({\mathcal{X}})$ and note that relative entropy can be
decomposed as
\begin{align}
\lefteqn{\frac{1}{N}R\left(  \otimes^{N}q\left\Vert \boldsymbol{Q}^{N}\right.
\right)  }\label{eq:tpt1}\\
&  =\frac{1}{N}\sum_{\boldsymbol{y}\in\mathcal{X}^{N}}\left(  \prod
\limits_{i=1}^{N}q_{y_{i}}\right)  \log\left(  \prod\limits_{i=1}^{N}q_{y_{i}%
}\right)  -\frac{1}{N}\sum_{\boldsymbol{y}\in\mathcal{X}^{N}}\left(
\prod\limits_{i=1}^{N}q_{y_{i}}\right)  \log\boldsymbol{Q}^{N}(\boldsymbol{y}%
).\nonumber
\end{align}
Let $\{X_{i}\}_{i\in\mathbb{N}}$ be an i.i.d.\ sequence of $\mathcal{X}%
$-valued random variables with common probability distribution $q$. Then
exactly as in the proof of Theorem \ref{ThRELimit}, for each $N\in\mathbb{N}$
\begin{equation}
\frac{1}{N}\sum_{\boldsymbol{y}\in\mathcal{X}^{N}}\left(  \prod\limits_{i=1}%
^{N}q_{y_{i}}\right)  \log\left(  \prod\limits_{i=1}^{N}q_{y_{i}}\right)
=E\left[  \log q_{X_{1}}\right]  =\sum_{x\in\mathcal{X}}q_{x}\log q_{x}.
\label{eq:step1}%
\end{equation}

Next, consider the second term on the right side of \eqref{eq:tpt1}. Since
$\boldsymbol{Q}^{N}$ is exchangeable there is a function $G^{N}:\mathcal{P}%
_{N}(\mathcal{X})\rightarrow\lbrack0,1]$ such that $\boldsymbol{Q}%
^{N}(\boldsymbol{y})=G^{N}(r^{N}(\boldsymbol{y}))$ for all $\boldsymbol{y}%
\in\mathcal{X}^{N}$. Then for $r\in{\mathcal{P}}_{N}({\mathcal{X}})$ we can
write
\[
\boldsymbol{Q}^{N}(\left\{  \boldsymbol{y}\in\mathcal{X}^{N}:r^{N}%
(\boldsymbol{y})=r\right\}  )=\left\vert \left\{  \boldsymbol{y}\in
\mathcal{X}^{N}:r^{N}(\boldsymbol{y})=r\right\}  \right\vert G^{N}(r).
\]
For notational convenience, let $C^{N}(r)=\left\vert \left\{  \boldsymbol{y}%
\in{\mathcal{X}}^{N}:r^{N}(\boldsymbol{y})=r\right\}  \right\vert $,
$r\in\mathcal{P}_{N}(\mathcal{X})$. Rearranging the last expression gives
\begin{equation}
G^{N}(r)=\frac{\boldsymbol{Q}^{N}(\left\{  \boldsymbol{y}\in{\mathcal{X}}%
^{N}:r^{N}(\boldsymbol{y})=r\right\}  )}{C^{N}(r)}. \label{def-fnq}%
\end{equation}
Since $\boldsymbol{Q}^{N}(\boldsymbol{y})=G^{N}(r^{N}(\boldsymbol{y}))$,
\begin{equation}
\frac{1}{N}\sum_{\boldsymbol{y}\in\mathcal{X}^{N}}\left(  \prod\limits_{i=1}%
^{N}q_{y_{i}}\right)  \log\boldsymbol{Q}^{N}(\boldsymbol{y})=\frac{1}{N}%
\sum_{\boldsymbol{y}\in\mathcal{X}^{N}}\left(  \prod\limits_{i=1}^{N}q_{y_{i}%
}\right)  \log G^{N}(r^{N}(\boldsymbol{y})). \label{eq:two}%
\end{equation}
Let $\Theta^{N}:{\mathcal{P}}_{N}({\mathcal{X}})\rightarrow\mathbb{R}%
\cup\{-\infty\}$ be defined by $\Theta^{N}(r)\doteq\frac{1}{N}\log G^{N}(r)$.
Using the fact that $P((X_{1},X_{2},\ldots,X_{N})=(y_{1},\ldots,y_{N}%
))=\prod_{i=1}^{N}q_{y_{i}}$, we can express the term on the right-hand side
of (\ref{eq:two}) in terms of the i.i.d.\ sequence $\{X_{i}\}_{i\in\mathbb{N}%
}$:
\begin{align}
\frac{1}{N}\sum_{\boldsymbol{y}\in\mathcal{X}^{N}}\left(  \prod\limits_{i=1}%
^{N}q_{y_{i}}\right)  \log\boldsymbol{Q}^{N}(\boldsymbol{y})  &  =E\left[
\Theta^{N}(r^{N}(X_{1},\ldots,X_{N}))\right] \nonumber\\
&  =E\left[  \Theta^{N}\left(  \frac{1}{N}\sum_{i=1}^{N}\delta_{X_{i}}\right)
\right]  . \label{eq:824}%
\end{align}

Let $\Theta:\mathcal{S}\rightarrow\mathbb{R}$ be defined by $\Theta
(r)\doteq\sum_{x\in\mathcal{X}}r_{x}\log r_{x}-J(r)$. We now show that
\begin{equation}
\mbox{ if }q^{N}\rightarrow q,\;q_{N}\in\mathcal{P}_{N}({\mathcal{X}%
}),\mbox{ then }\Theta^{N}(q^{N})\rightarrow\Theta(q). \label{eq:unif-fin}%
\end{equation}
Fix $\varepsilon>0$. By the assumption that $r^{N}$ under $\boldsymbol{Q}^{N}$
satisfies a locally uniform LDP with rate function $J$, and that $J(q) <
\infty$, there exists $N_{0}<\infty$ such that for all $N\geq N_{0}$,
\begin{equation}
e^{-N(J(q)+\varepsilon)}\leq\boldsymbol{Q}^{N}(\left\{  \boldsymbol{y}%
:r^{N}(\boldsymbol{y})=q^{N}\right\}  )\leq e^{-N(J(q)-\varepsilon)}.
\label{eq:839}%
\end{equation}

Next, as in Theorem \ref{ThRELimit}, let $\nu$ denote the uniform measure on
${\mathcal{X}}$ and let $\boldsymbol{Q}_{0}^{N}=\otimes^{N}\nu^{N}$. We claim
that under $\boldsymbol{Q}_{0}^{N}$, $r^{N}$ satisfies a locally uniform LDP
with rate function
\begin{equation}
\tilde{J}(p)=\sum_{x\in{\mathcal{X}}} p_{x}\log p_{x}+\log|{\mathcal{X}%
}|,\quad p\in{\mathcal{P}}({\mathcal{X}}). \label{def-barj}%
\end{equation}
Indeed, elementary combinatorial arguments (see, for example, Lemma 2.1.9 of
\cite{DemZeiBook}) show that for every $N\in\mathbb{N}$,
\begin{equation}
(N+1)^{-|{\mathcal{X}}|}e^{-NR(q^{N}\Vert\nu)}\leq\boldsymbol{Q}_{0}%
^{N}(\left\{  \boldsymbol{y}:r^{N}(\boldsymbol{y})=q^{N}\right\}  )\leq
e^{-NR(q^{N}\Vert\nu)}. \label{rem-eq}%
\end{equation}
Since $\nu$ is the uniform measure on ${\mathcal{X}}$,
\[
R(q^{N}\Vert\nu)=\sum_{x\in{\mathcal{X}}}q_{x}^{N}\log q_{x}^{N}%
-\sum_{x\in{\mathcal{X}}}q_{x}^{N}\log\frac{1}{|{\mathcal{X}|}}=\tilde
{J}(q^{N}).
\]
The locally uniform LDP of $r^{N}$ under $\boldsymbol{Q}_{0}^{N}$ then follows
from the continuity of $\tilde{J}$ and that $\frac{1}{N}\log$
$(N+1)^{-|{\mathcal{X}}|}\rightarrow0$ as $N\rightarrow\infty$.

The relation
\[
\boldsymbol{Q}_{0}^{N}(\left\{  \boldsymbol{y}:r^{N}(\boldsymbol{y}%
)=q^{N}\right\}  )=\frac{C^{N}(q^{N})}{\left\vert \mathcal{X}\right\vert ^{N}}%
\]
implies there exists $\tilde{N}_{0}<\infty$ such that for all $N\geq\tilde
{N}_{0}$,
\[
e^{-N(\tilde{J}(q)+\varepsilon)}\leq\frac{C^{N}(q^{N})}{\left\vert
\mathcal{X}\right\vert ^{N}}\leq e^{-N(\tilde{J}(q)-\varepsilon)}.
\]
Combining the last display with \eqref{eq:839} and \eqref{def-fnq}, we
conclude that for $N\geq\max\{N_{0},\tilde{N}_{0}\}$
\[
e^{-N(J(q)+\varepsilon)}e^{N(\tilde{J}(q)-\varepsilon)}e^{-N\log\left\vert
\mathcal{X}\right\vert }\leq G^{N}(q^{N})\leq e^{-N(J(q)-\varepsilon
)}e^{N(\tilde{J}(q)+\varepsilon)}e^{-N\log\left\vert \mathcal{X}\right\vert
},
\]
and thus for such $N$, recalling that $\Theta^{N}(r)=\log G^{N}(r)$
\[
\tilde{J}(q)-J(q)-\log\left\vert \mathcal{X}\right\vert -2\varepsilon
\leq\Theta^{N}(q^{N})\leq\tilde{J}(q)-J(q)-\log\left\vert \mathcal{X}%
\right\vert +2\varepsilon.
\]
Recalling $\Theta(r)=\sum_{x\in\mathcal{X}}r_{x}\log r_{x}-J(r)$ and
(\ref{def-barj}), for all such $N$, $|\Theta^{N}(q^{N})-\Theta(q)|\leq
2\varepsilon$. This proves \eqref{eq:unif-fin}.

By the strong law of large numbers $r^{N}(X_{1},\ldots,X_{N})=\frac{1}{N}%
\sum_{i=1}^{N}\delta_{X_{i}}$ converges weakly to $q$ almost surely with
respect to $P$. Thus by (\ref{eq:unif-fin})
\begin{equation}
\lim_{N\rightarrow\infty}\Theta^{N}\left(  r^{N}(X_{1},\ldots,X_{N})\right)
=\Theta(q) \label{conv-tn}%
\end{equation}
almost surely. Using \eqref{eq:unif-fin} again, the property that
$\Theta(r)<\infty$ for $r\in\mathcal{P}({\mathcal{X}})$, and the compactness
of $\mathcal{P}({\mathcal{X}})$, it follows that
\[
\limsup_{N\rightarrow\infty}\sup_{r\in\mathcal{P}_{N}(\mathcal{X})}|\Theta
^{N}(r)|<\infty.
\]
Thus by (\ref{conv-tn}) and the bounded convergence theorem,
\[
\lim_{N\rightarrow\infty}E\left[  \Theta^{N}\left(  \frac{1}{N}\sum_{i=1}%
^{N}\delta_{X_{i}}\right)  \right]  =\Theta(q).
\]
When combined with \eqref{eq:824}, this implies
\[
\lim_{N\rightarrow\infty}\frac{1}{N}\sum_{\boldsymbol{y}\in\mathcal{X}^{N}%
}\left(  \prod\limits_{i=1}^{N}q_{y_{i}}\right)  \log\boldsymbol{Q}%
^{N}(\boldsymbol{y})=\lim_{N\rightarrow\infty}E\left[  \Theta^{N}\left(
\frac{1}{N}\sum_{i=1}^{N}\delta_{X_{i}}\right)  \right]  =\Theta(q).
\]
Recalling $\Theta(q)=\sum_{x\in\mathcal{X}}q_{x}\log q_{x}-J(q)$ and using
\eqref{eq:tpt1}--\eqref{eq:step1}, the last display implies
\[
\frac{1}{N}R\left(  \otimes^{N}q\left\Vert \boldsymbol{Q}^{N}\right.  \right)
\rightarrow J(q)
\]
and completes the proof.
\end{proof}

\subsection{Evaluation of the limit, Freidlin-Wentzell quasipotential, and
metastability}

We saw in Theorem \ref{thm:limre} that the limit of the relative entropies
$\frac{1}{N}R(\otimes^{N}q\Vert\boldsymbol{Q}^{N})$ is just the rate function
$J$ of the empirical measure under $\boldsymbol{Q}^{N}$, evaluated at the
marginal of the initial product distribution $\otimes^{N}q$. We next state a
condition and a theorem that imply the LDP holds for the empirical measure
$\mu^{N}(t)$, $t\ge0$, introduced in Section \ref{subs-jmp}.

\begin{condition}
\label{ass-ergodic} Suppose that for each $r\in{\mathcal{S}}$, $\Gamma
(r)=\{\Gamma_{xy}(r),x,y\in{\mathcal{X}}\}$, is the transition rate matrix of
an ergodic ${\mathcal{X}}$-valued Markov chain.
\end{condition}

We will use the following locally uniform LDP for the empirical measure process. The LDP has been established in
\cite{Leo, BorSun} while the locally uniform version used here is  taken from
\cite{DupRamWu12}.

\begin{theorem}
\label{thm:ldpthm} Assume Conditions \ref{ass-llnbasic} and \ref{ass-ergodic}.
For $t\in\lbrack0,\infty)$ let $\boldsymbol{p}^{N}(t)$ be the distribution of
${\boldsymbol{X}}^{N}(t)=(X^{1,N}(t),\ldots,X^{N,N}(t))$, where
${\boldsymbol{X}}^{N}$ is the ${\mathcal{X}}^{N}$-valued Markov process from
Section \ref{subs-model} with exchangeable initial distribution
$\boldsymbol{p}^{N}(0)$. Recall the mapping $r^{N}:\mathcal{X}^{N}%
\rightarrow\mathcal{P}_{N}(\mathcal{X})$ given by (\ref{def-rn}), i.e.,
$r^{N}(\boldsymbol{x})$ is the empirical measure of $\boldsymbol{x}$ . Assume
that $r^{N}$ under the distribution $\boldsymbol{p}^{N}(0)$ satisfies a LDP
with a rate function $J_{0}$. Then for each $t\in\lbrack0,\infty)$, $r^{N}$
under the distribution $\boldsymbol{p}^{N}(t)$ satisfies a locally uniform LDP
on ${\mathcal{P}}({\mathcal{X}})$ with a rate function $J_{t}$. Furthermore,
$J_{t}(q)<\infty$ for all $q\in{\mathcal{P}}({\mathcal{X}})$.
\end{theorem}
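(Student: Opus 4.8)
The plan is to obtain the time-$t$ marginal LDP by combining a sample-path large deviation principle with the contraction principle, and then to verify finiteness of $J_t$ separately using ergodicity. First I would invoke the process-level LDP for the empirical-measure process $\{\mu^N(\cdot)\}$ on $D([0,t]:\mathcal{S})$, which under Condition \ref{ass-llnbasic} is available from \cite{Leo,BorSun} (and in the sample-path setting from \cite{DawGar}): the uniform convergence of $\Gamma^N_{xy}$ to the Lipschitz limits $\Gamma_{xy}$ guarantees that the associated jump Hamiltonian and its Legendre transform $L$ are well behaved, and the resulting path-space rate function decomposes as
\[
I_{[0,t]}(\phi)=J_0(\phi(0))+\int_0^t L(\phi(s),\dot\phi(s))\,ds
\]
for absolutely continuous $\phi$, and equals $+\infty$ otherwise, where $J_0$ is the given initial rate function.

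Second, the endpoint map $\phi\mapsto\phi(t)$ is continuous on path space, so the contraction principle yields the standard LDP for $\mu^N(t)$ with rate function
\[
J_t(q)=\inf\{I_{[0,t]}(\phi):\phi(t)=q\}.
\]
The genuinely delicate point is the strengthening to a \emph{locally uniform} LDP, namely matching exponential upper and lower bounds for the single-lattice-point probabilities $\P(r^N(\boldsymbol{X}^N(t))=q^N)$ along any sequence $q^N\to q$ with $q^N\in\mathcal{P}_N(\mathcal{X})$. This cannot be read off from the set-based contraction argument, and here I would rely on the refinement of \cite{DupRamWu12}, whose weak-convergence/stochastic-control method produces precisely these pointwise estimates for the time-$t$ marginal rather than only open/closed set bounds.

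Finally, I would establish $J_t(q)<\infty$ for every $q$. Because $\{r^N\}$ under $\boldsymbol{p}^N(0)$ satisfies a full LDP and the $\boldsymbol{p}^N(0)$ are probability measures, applying the upper bound to all of $\mathcal{S}$ forces $\inf_{p}J_0(p)=0$; in particular there is $p_0\in\mathcal{S}$ with $J_0(p_0)<\infty$. It then suffices to exhibit one absolutely continuous path $\phi$ on $[0,t]$ with $\phi(0)=p_0$, $\phi(t)=q$, and $\int_0^t L(\phi,\dot\phi)\,ds<\infty$, for then $J_t(q)\le J_0(p_0)+\int_0^t L(\phi,\dot\phi)\,ds<\infty$. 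The existence of such a finite-cost path is exactly a controllability statement, and this is where Condition \ref{ass-ergodic} enters: since each $\Gamma(r)$ is the rate matrix of an ergodic chain, the relevant off-diagonal rates are strictly positive along a communicating structure, so $L(r,\cdot)$ is finite on a set of velocities rich enough to steer the empirical measure between any two points of $\mathcal{S}$ within time $t$.

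The hard part will be the locally uniform refinement, i.e.\ upgrading from set-based to single-point estimates; I expect to defer entirely to \cite{DupRamWu12} for this, since the lattice/communication structure needed to justify the pointwise lower bound is precisely what their analysis supplies. The finiteness step is the only part requiring new input here, and it hinges essentially on ergodicity; I would keep an eye on the boundary of $\mathcal{S}$, since reaching (or starting from) faces where some coordinate vanishes can make certain local costs blow up, and I would route the connecting path through the relative interior $\mathcal{S}^\circ$ to keep $\int_0^t L(\phi,\dot\phi)\,ds$ finite.
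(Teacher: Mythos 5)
The paper offers no proof of this theorem: it is quoted from the literature, with the LDP attributed to \cite{Leo,BorSun} and the locally uniform, single-point version taken from \cite{DupRamWu12} --- exactly the division of labor you propose, including deferring the genuinely delicate pointwise refinement to \cite{DupRamWu12}. Your reconstruction (path-space LDP plus contraction via the terminal-time evaluation map, which is indeed continuous on $D([0,t]:\mathcal{S})$ since Skorohod time changes fix the endpoints, together with an ergodicity-based finite-cost controllability argument for $J_{t}(q)<\infty$, valid for $t>0$) is consistent with those sources, so your approach matches the paper's.
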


The rate function $J_{t}$ takes the form (see \cite{Leo, BorSun,DupRamWu12}), 
\begin{equation}
J_{t}(q)=\inf\left\{  J_{0}(\phi(0))+\int_{0}^{t}L(\phi(s),\dot{\phi
}(s))ds:\phi(t)=q\right\}  , \label{eq:JT}%
\end{equation}
where the infimum is over all absolutely continuous $\phi:[0,t]\rightarrow
\mathcal{S}$ and $L$ takes an explicit form. As an immediate consequence of
Theorems \ref{thm:limre} and \ref{thm:ldpthm}, we have the following
characterization of $\lim_{N\rightarrow\infty}F_{t}^{N}(q)$.

\begin{theorem}
\label{cor:limfnq}Assume all the conditions of Theorem \ref{thm:ldpthm}. For
$N\in\mathbb{N}$ and $t\in\lbrack0,\infty)$, let $F_{t}^{N}$ be defined as in
\eqref{eq:fnq} and $J_{t}$ be as in Theorem \ref{thm:ldpthm}. Then
\[
\lim_{N\rightarrow\infty}F_{t}^{N}(q)=J_{t}(q),\quad q\in\mathcal{P}%
(\mathcal{X}).
\]

\end{theorem}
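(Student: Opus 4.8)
The plan is to observe that Theorem \ref{cor:limfnq} is precisely the composition of Theorems \ref{thm:limre} and \ref{thm:ldpthm}, so the work reduces to checking that the hypotheses of the former are met when its exchangeable law $\boldsymbol{Q}^{N}$ is taken to be $\boldsymbol{p}^{N}(t)$. First I would fix $t\in[0,\infty)$ and $q\in\mathcal{P}(\mathcal{X})$ and set $\boldsymbol{Q}^{N}\doteq\boldsymbol{p}^{N}(t)$, the law of $\boldsymbol{X}^{N}(t)$. By the definition \eqref{eq:fnq} of $F_{t}^{N}$ we then have $F_{t}^{N}(q)=\frac{1}{N}R(\otimes^{N}q\Vert\boldsymbol{Q}^{N})$, so it suffices to invoke Theorem \ref{thm:limre} for this particular sequence $\{\boldsymbol{Q}^{N}\}_{N\in\mathbb{N}}$.

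Next I would verify the hypotheses of Theorem \ref{thm:limre}. The exchangeability of $\boldsymbol{Q}^{N}=\boldsymbol{p}^{N}(t)$ follows from the fact that the initial distribution $\boldsymbol{p}^{N}(0)$ is exchangeable (as assumed in Theorem \ref{thm:ldpthm}) together with the observation recorded in Section \ref{subs-model} that the weakly interacting dynamics propagate exchangeability to every time $t>0$. The requirement that $r^{N}$ under $\boldsymbol{p}^{N}(t)$ satisfy a locally uniform LDP on $\mathcal{P}(\mathcal{X})$ with rate function $J_{t}$ is exactly the conclusion of Theorem \ref{thm:ldpthm}, and the finiteness $J_{t}(q)<\infty$ for every $q\in\mathcal{P}(\mathcal{X})$ is the last assertion of that theorem. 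Thus all hypotheses of Theorem \ref{thm:limre} are in force.

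Having matched the hypotheses, I would simply apply Theorem \ref{thm:limre} with $\boldsymbol{Q}^{N}=\boldsymbol{p}^{N}(t)$ and $J=J_{t}$ to conclude
\[
\lim_{N\rightarrow\infty}F_{t}^{N}(q)=\lim_{N\rightarrow\infty}\frac{1}{N}R\left(\otimes^{N}q\Vert\boldsymbol{p}^{N}(t)\right)=J_{t}(q),
\]
as desired. There is no substantial obstacle here: the statement is flagged as an immediate consequence of the two preceding theorems, and the only genuine bookkeeping is confirming that $\boldsymbol{p}^{N}(t)$ inherits exchangeability and that the finiteness hypothesis transfers, both of which are already available in the text. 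The real content lies entirely in Theorems \ref{thm:limre} and \ref{thm:ldpthm}, which respectively carry out the evaluation of the scaled relative entropy as a rate function and the establishment of the locally uniform LDP for the time-$t$ empirical measure.
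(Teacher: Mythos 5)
Your proposal is correct and follows exactly the paper's own proof: the paper likewise notes that exchangeability of $\boldsymbol{p}^{N}(0)$ propagates to $\boldsymbol{p}^{N}(t)$, cites Theorem \ref{thm:ldpthm} for the locally uniform LDP with finite rate function $J_{t}$, and then applies Theorem \ref{thm:limre} with $\boldsymbol{Q}^{N}=\boldsymbol{p}^{N}(t)$. No gaps; the verification of hypotheses you carry out is precisely the content of the paper's short proof.
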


\begin{proof}
Recall that $\boldsymbol{p}^{N}(0)$ is an exchangeable distribution on
$\mathcal{X}^{N}$, which implies that $\boldsymbol{p}^{N}(t)$ is exchangeable
for all $t\geq0$. From Theorem \ref{thm:ldpthm} $r^{N}$ under $\boldsymbol{p}%
^{N}(t)$ satisfies a locally uniform LDP with rate function $J_{t}$ such that
$J_{t}(q)$ is finite for all $q\in\mathcal{P}(\mathcal{X})$. The result now
follows from Theorem \ref{thm:limre}.
\end{proof}

\label{sec:quasandmeta}

\medskip Recall that the ideal candidate Lyapunov function based on the
descent property of Markov processes would be
\[
\lim_{N\rightarrow\infty}\lim_{t\rightarrow\infty}\frac{1}{N}R(\otimes
^{N}q\Vert\boldsymbol{p}^{N}(t)),
\]
where by ergodicity the limit is independent of $\boldsymbol{p}^{N}(0)$. If
this is not possible, another candidate is found by interchanging the order of
the limits. In this case we can apply Theorem \ref{cor:limfnq}, and then send
$t\rightarrow\infty$ to evaluate the inverted limit. Note that in general,
this limit will depend on $\boldsymbol{p}^{N}(0)$ through $J_{0}$. We will use
this limit, and in particular the form (\ref{eq:JT}), in two ways. The first
is to derive an analytic characterization for the limit as $t\rightarrow
\infty$ of $J_{t}(q)$. This characterization will be used in
\cite{BDFR-Examples}, together with insights into the structure of candidate
Lyapunov functions obtained from the Gibbs models of Section
\ref{sec:gibbstype}, to identify and verify that candidate Lyapunov functions
for various classes of models actually are Lyapunov functions. The second use
is to directly connect these limits of relative entropies with the
Freidlin-Wentzell quasipotential related to the processes $\left\{  \mu
^{N}\right\}  $. The quasipotential provides another approach to the
construction of candidate Lyapunov functions, but one based on notions of
\textquotedblleft energy conservation\textquotedblright\ and related
variational methods, and with no a priori connection with the descent property
of relative entropies for linear Markov processes. In the rest of this section
we further compare these approaches.

Suppose that $\pi^{\ast}$ is a (locally) stable equilibrium point for
$p^{\prime}=p\Gamma(p)$, so that for some relatively open subset
$\mathbb{D}\subset\mathcal{S}$ with $\pi^{\ast}\in\mathbb{D}$ and if
$p(0)\in\mathbb{D}$ then the solution to $p^{\prime}=p\Gamma(p)$ satisfies
$p(t)\rightarrow\pi^{\ast}$ as $t\rightarrow\infty$. From \cite{Leo, BorSun, DupRamWu12} it
follows that if a deterministic sequence $\mu^{N}(0)$ converges to
$p(0)\in\mathcal{S}$, then for each $T\in(0,\infty)$ $\{\mu^{N}(t)\}_{0\leq
t\leq T}$ satisfies a LDP in $D([0,T]:\mathcal{S})$ with the rate function
\[
\int_{0}^{T}L(\phi(s),\dot{\phi}(s))ds
\]
if $\phi(\cdot)$ is absolutely continuous with $\phi(0)=p(0)$, and equal to
$\infty$ otherwise. The Freidlin-Wentzell quasipotential associated with the
large time, large $N$ behavior of $\{\mu^{N}(t)\}$ and with respect to the
initial condition $\pi^{\ast}$ is defined by
\[
V^{\pi^{\ast}}(q)=\inf\left\{  \int_{0}^{T}L(\phi(s),\dot{\phi}(s))ds:\phi
(0)=\pi^{\ast},\phi(T)=q,T\in(0,\infty)\right\}
\]
where the infimum is over all absolutely continuous $\phi:[0,T]\rightarrow
\mathcal{S}$.

Next suppose $J_{0}$ is a rate function that is consistent with the weak
convergence of $r^{N}$ under $\boldsymbol{p}^{N}(0)$ to $\pi^{\ast}$ as
$N\rightarrow\infty$. One example is $\bar{J}_{0}(r)=R(r\Vert\pi^{\ast})$,
which corresponds to $\boldsymbol{p}^{N}(0)$ equal to product measure with
marginals all equal to $\pi^{\ast}$. A second example is $J_{0}^{\pi^{\ast}%
}(r)=0$ when $r=\pi^{\ast}$ and $\infty$ otherwise, which corresponds to a
\textquotedblleft nearly deterministic\textquotedblright\ initial condition
$\boldsymbol{p}^{N}(0)$. To simplify we will consider just $J_{0}^{\pi^{\ast}%
}$. \ All other choices of $J_{0}$ bound $J_{0}^{\pi^{\ast}}$ from below and,
while leading to other candidate Lyapunov functions, they will also bound the
one corresponding to $J_{0}^{\pi^{\ast}}$ from below. Using the fact that
\[
t\mapsto J_{t}^{\pi^{\ast}}(q)\doteq\inf\left\{  J_{0}^{\pi^{\ast}}%
(\phi(0))+\int_{0}^{t}L(\phi(s),\dot{\phi}(s))ds:\phi(t)=q\right\}
\]
is monotonically decreasing, it follows that
\[
F^{\pi^{\ast}}(q)\doteq\lim_{t\rightarrow\infty}J_{t}^{\pi^{\ast}}%
(q)=V^{\pi^{\ast}}(q).
\]

Thus for a particular choice of $J_{0}$ these two different perspectives lead
to the same candidate Lyapunov function. However, this connection does not
seem a priori obvious, and we note the following distinctions. For example,
the distributions involved in the construction of $F^{\pi^{\ast}}$ via limits
of relative entropy are the product measures $\otimes_{1}^{N}q$ on
$\mathcal{X}^{N}$ with marginal $q$, and an a priori unrelated distribution
$\boldsymbol{p}^{N}(t)$ which is the joint distribution of $N$ particles at
time $t$. In contrast, the distribution relevant in the construction via the
quasipotential is the measure induced on path space by $\{\mu^{N}%
(\cdot)\}_{N\in\mathbb{N}}$ and with a sequence of initial conditions $\mu^n(0)$ which converge super-exponentially fast to $\pi^{\ast}$, and
$V^{\pi^{\ast}}(q)$ \ is defined in terms of a sample path rate function for
$\{\mu^{N}(\cdot)\}_{N\in\mathbb{N}}$ constrained to hit $q$ at the terminal time.

For both of these approaches there is a need to consider a large time limit.
When using relative entropy, to guarantee a monotone nonincreasing property
both distributions appearing in the relative entropy must be advanced by the
same amount of time. Hence it will serve as a Lyapunov function for all $q$
only if $\boldsymbol{p}^{N}(t)$ is essentially independent of $t$, which
requires sending $t\rightarrow\infty$. When using a variational formulation to
define a Lyapunov function via \textquotedblleft energy
storage\textquotedblright\ a time independent function is produced only if one
allows an arbitrarily large amount of time to go from $\pi$ to $q$, and thus
we only construct the quasipotential by allowing $T\in(0,\infty)$ in the
definition of $V^{\pi^{\ast}}$.

It is also interesting to ask what is lost by inverting the limits on $t$ and
$N$. To discuss this point we return to a particular model described in
Section \ref{sec:gibbstype}. Let $V:{\mathcal{X}}\rightarrow\mathbb{R}$,
$W:{\mathcal{X}}\times{\mathcal{X}}\rightarrow\mathbb{R}$ be given functions,
$\beta>0$, and associate interacting particle systems as in Section
\ref{sec:gibbstype}. Recall that for this family of models $F(q)$, as
introduced in (\ref{eq:firstlyap}), is given as
\begin{equation}
F(q)=\sum_{x\in\mathcal{X}}q_{x}\log q_{x}+\sum_{x\in\mathcal{X}}%
V(x)q_{x}+\beta\sum_{x,y\in\mathcal{X}}W(x,y)q_{x}q_{y}. \label{gibbslyapfn}%
\end{equation}
It is easy to check that $F$ is $C^{1}$ on $\mathcal{S}^{\circ}$. One can show
that in general multiple fixed points of the forward equation
\eqref{EqLimitKolmogorov} exist and the function $F$ serves as a local
Lyapunov function at all those fixed points which correspond to local minima.
In contrast, local Lyapunov functions constructed by taking the limits in the
order $N\rightarrow\infty$ and then $t\rightarrow\infty$ lose all
metastability information, and hence serve as local Lyapunov functions for the
point $\pi^{\ast}$ used in their definition.

\end{document}